\long\def\symbolfootnote[#1]#2{\begingroup%
\def\thefootnote{\fnsymbol{footnote}}\footnote[#1]{#2}\endgroup}
\newcommand{\tr}{\ensuremath{{}^t\!}}
\newcommand{\tra}{\ensuremath{{}^t}}
\newcommand{\x}{\mathfrak X}
\newcommand{\Aut}{\textup{Aut}}
\newcommand{\diag}{\textup{diag}}
\newcommand{\End}{\textup{End}}
\newcommand{\cross}{\times}
\def\imod#1{\allowbreak\mkern10mu({\operator@font mod}\,\,#1)}
\numberwithin{section}{chapter}
\newtheorem{theorem}{Theorem}[section]
\newtheorem{lemma}[theorem]{Lemma}
\newtheorem{proposition}[theorem]{Proposition}
\newtheorem{definition}[theorem]{Definition}
\newtheorem*{theorem*}{Theorem}
\theoremstyle{definition}
\newtheorem{exercise}[theorem]{\bf Exercise}
\numberwithin{equation}{chapter}
\newcommand{\ignore}[1]{}
\newcommand{\mynote}[1]{}
\begin{document}
\setcounter{section}{0}
\title{From Linear Algebra to Algebraic Groups  \\ {\large An exercise based approach to matrix groups}}
\author{Anupam Singh}
\address{IISER Pune, Dr. Homi Bhabha Road, Pashan, Pune 411 008 INDIA}
\email{anupamk18@gmail.com}
\date{}


\maketitle
\tableofcontents

\chapter{Introduction}

What's common among the great batsman Sachin Tendulkar, the great athlete Usion Bolt, the great tennis player Roger Fedrer and the great footballer Maradonna?

{\bf Answer : They spend hours after hours, days after days training!} 

Over the years, the author has taught material presented here to the graduate and undergraduate students at IISER Pune and in various summer and winter schools meant for graduate students and/or teachers. The author would like to thank the participants of various ATM schools who enthusiastically responded to the lectures and tutorials.  
\begin{enumerate}
\item {\bf Why this?} The idea behind these notes is to introduce matrix groups. The groups we mostly encounter are matrix groups: most of the finite simple groups, algebraic groups, Lie groups etc. Yet, in our undergraduate course on group theory we do not introduce matrix groups. This is an attempt to correct that mistake.
\item {\bf For whom?} This book grew out of an attempt to convince audience that they can make use of Linear Algebra they know, to study some of the matrix groups. Any PhD student, undergraduate student or teacher will benefit from this. The topics presented here form bread and butter for group theorists, number theorists, physicists and engineers.  
\item {\bf How to read this?}
This note is meant to be a supplement for those who have studied first course in Linear Algebra and Group Theory. That is why its in the form of exercises. The author confesses that some of the exercises throw big, unknown words but they are not meant to scare you. It's hoped that this will enthuse you to look further.
\end{enumerate}
We present some motivation behind this attempt. Author hopes that these stories will enthuse the audience towards this material.

\section{CFSG}
One of the biggest achievements of the last century is ``The Classification of Finite Simple Groups''. It was born out of the following: 

{\bf Can we classify all finite groups?}

Let $G$ be a finite group. If $G$ has a proper normal subgroup $N$, it can be broken into two pieces: $N, G/N$. However, usually it is not easy to construct the group $G$ given the smaller groups. They fit in an exact sequence:
$$1\rightarrow N \rightarrow G \rightarrow G/N \rightarrow 1.$$
The problem of constructing $G$ out of given other two is called "Extension Problem" and is a difficult problem. For example, to determine central extension one uses some cohomology theory. 

However, using the above idea we reduce the problem of classifying all finite groups to the classification of finite simple groups. This has been achieved now. It has taken efforts of more than a hundred year and several hundred mathematicians. Thus it is usually not attributed to any one of them.
\begin{theorem}[CFSG]
A finite simple group is one of the following:
\begin{enumerate}
\item Cyclic groups $\mathbb Z/p\mathbb Z$, where $p$ is a prime.
\item Alternating groups $A_n$ for $n\geq 5$.
\item Finite groups of Lie type
\begin{enumerate}
\item classical type 
\item exceptional type
\end{enumerate} 
\item The sporadic groups $26$ of them.
\end{enumerate}
\end{theorem}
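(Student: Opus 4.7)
The plan is not to attempt a self-contained proof: a genuine proof of CFSG runs to tens of thousands of pages spread over hundreds of papers by a large community of mathematicians over roughly a century. What follows is only a sketch of the strategic skeleton that organizes that effort, aimed at conveying why the statement is plausible and where the hard work lies.

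First, I would reduce to the case of groups of even order. By the Feit--Thompson odd order theorem, every finite group of odd order is solvable, so every non-abelian finite simple group has even order and therefore contains an involution. Already this reduction is one of the deepest single steps in the whole program (the original paper is over $250$ pages). Once involutions are available, I would pass to local analysis via their centralizers, using the Brauer--Fowler theorem: for any fixed finite group $H$, only finitely many finite simple groups admit an involution with centralizer isomorphic to $H$. This makes a centralizer-of-involution classification strategy feasible in principle.

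Second, I would organize the analysis around the $2$-local subgroups (normalizers of nontrivial $2$-subgroups) and prove a dichotomy: according to the $2$-rank and whether $G$ is of \emph{characteristic $2$ type} (meaning $C_G(O_2(M)) \subseteq O_2(M)$ for every $2$-local subgroup $M$), the group $G$ either resembles a group of Lie type in odd characteristic, a group of Lie type in characteristic $2$, or an alternating / sporadic group. Within each branch the identification step is to extract from the local structure enough geometric data --- a $BN$-pair, a Curtis--Tits presentation, or an amalgam of parabolics --- that Steinberg's presentations or Tits's classification of spherical buildings force $G$ to be isomorphic to one of the listed groups. Low $2$-rank configurations (Gorenstein--Walter, Alperin--Brauer--Gorenstein) and the sporadic groups are handled as exceptional cases, each ultimately pinned down by the isomorphism type of an involution centralizer.

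The main obstacle is that essentially every bullet above is itself a major body of work. The single hardest piece is probably the quasithin classification, which remained open for decades after the rest of the proof was in place and required two volumes of Aschbacher--Smith to settle; a close second is the Feit--Thompson theorem that opens the whole program. For the purposes of these notes, therefore, CFSG is best treated as a black box: cited, used as motivation for studying matrix groups explicitly, and left unproven.
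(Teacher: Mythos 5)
Your conclusion matches the paper exactly: the text states CFSG without proof, attributing it to a century-long collective effort, and uses it purely as motivation for studying matrix groups. Your historical sketch of the proof strategy (Feit--Thompson, Brauer--Fowler, local analysis, identification via buildings and presentations, the quasithin case) is accurate and a useful supplement, but both you and the paper correctly treat the theorem as a black box to be cited rather than proven here.
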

In our undergraduate group theory course we learn the first two family. The idea here is to attempt to convince the audience that they can get familiar with the third family as well using their knowledge of Linear Algebra, at least for sure the group of classical type. For the history and more information please read the wikipedia article. Here are the classical groups:
\begin{enumerate}
\item Special Linear Groups
\item Orthogonal Groups of odd dimension
\item Symplectic Groups
\item Orthogonal Groups of even dimension
\item Unitary Groups
\end{enumerate}

\section{Representation theory}
Let $G$ be a group. While working with some examples of groups we quickly realise that there is not much we know about them. One way is possibly to compare them with some known groups. In this case we take the known groups as linear group, $GL(V)$, and the comparison is made via all group homomorphisms from $G$ to linear groups. More precisely, we define a representation.

Fix a field $k$. A representation of $G$ is a group homomorphism $\rho \colon G \rightarrow GL(V)$ where $V$ is a vector space over the field $k$. A subspace $W\subset V$ is said to be invariant if $\rho(g)(W)\subset W$, for all $g\in G$. Further, a representation is said to be irreducible if it has no proper invariant subspace.

For a finite group $G$, it turns out that over algebraically closed field $k$ when $char(k)\nmid |G|$ the representations of the group can be understood via character theory. In this case every representation can be written as a direct sum of irreducible representations. Thus, the character theory is built on knowing basic group theory and linear algebra well.

\section{Computational group theory}
To understand a subject it is often important to know many examples. Even better, if one could compute with these examples and verify before making any conjecture or trying to solve a problem, would be divine.
Due to advancement in computational power the idea is to implement groups in the computer so that one is able to compute with those examples. There are several computational softwares SAGE, GAP and MAGMA. To make the idea clear: like any software there are two group of people, developers and users. It goes without saying that knowing these packages adds an extra dimension to one's own research. It's interesting to learn some of the fundamental problems in CGT.

\section{Robotics, Computer graphics etc}
In robotics basic mathematics suc as rotation and translation in 3 dimension is used. One often works with the groups $O_2(\mathbb R)$ and $O_3(\mathbb R)$, and slightly more general, isometry groups. In computer graphics Hamilton's quaternions are used to achieve the desired results.


\chapter{The Group $SL_2$}
The group $SL_2$ is ubiquitous in mathematics. It appears 
\begin{enumerate}
\item in algebraic groups as a smallest example of simple algebraic groups $SL_2$,
\item in Lie groups as the group $SL_2(\mathbb R)$,
\item in complex analysis as mobius transformations $SL_2(\mathbb Z)$,
\item in topology as an example of free product $PSL_2(\mathbb Z)\cong \mathbb Z/2\mathbb Z * \mathbb Z/3\mathbb Z$,
\item in finite group theory as a part of the finite simple groups $PSL_2(q)$,
\item in number theory as the action of $PSL_2(\mathbb Z)$ on the upper half plane, 
\item in K-theory as the group $SL_2(R)$ over a ring $R$ with identity etc.
\end{enumerate}
The exercises below are devoted to the ubiquity of this group.

\section{Generation and Automorphisms of the group $SL_2(k)$}
Let $k$ be a field. The group $SL_2(k)=\left\{X\in M_2(k)\mid \det(X)=1 \right\}$. Now define $x_{12}(t):= \begin{pmatrix} 1&t\\ &1\end{pmatrix}$ and $x_{21}(s):= \begin{pmatrix} 1&\\s &1\end{pmatrix}$ for $t, s\in k$. The matrices $x_{12}(t)$ and $x_{21}(s)$ are called {\bf elementary matrices}.

\begin{exercise} With the notation as above, 
\begin{enumerate}
\item The maps $\mathfrak{X}_{12}\colon k \rightarrow SL_2(k)$ given by $t\mapsto x_{12}(t)$ and $\mathfrak{X}_{21}\colon k \rightarrow SL_2(k)$ given by $t\mapsto x_{21}(t)$ are group homomorphisms where $k$ is additive group of the field.
\item Compute $w(t)=x_{12}(t)x_{21}(-t^{-1})x_{12}(t)$. What is $w(-1)$?
\item Compute $h(t)=w(t)w(-1)$.
\item For $c\neq 0$, verify the following for an element in $SL_2(k)$, 
$$\begin{pmatrix}a&b\\c&d\end{pmatrix}= \begin{pmatrix}1&(a-1)c^{-1}\\&1\end{pmatrix} \begin{pmatrix}1&\\c&1\end{pmatrix} \begin{pmatrix}1&(d-1)c^{-1}\\ &1\end{pmatrix}.$$
\item Show that the group $SL_2(k)$ is generated by the set of all elementary matrices $\{x_{12}(t), x_{21}(s) \mid t,s\in k\}$. 
\end{enumerate}
\end{exercise}
\noindent This exercise shows that the elementary matrices generate the group. These generators are also called {\bf Chevalley generator}.

\begin{exercise}[Bruhat decomposition]
Let $B=\left\{\begin{pmatrix}a&b\\&a^{-1}\end{pmatrix}\right\}$ be the set of upper triangular matrices in $SL_2(k)$. Show the following double coset decomposition of $SL_2(k)$,
$$B\backslash SL_2(k)/B = B\bigcup B \begin{pmatrix}&-1\\1&\end{pmatrix} B.$$
\end{exercise}
\noindent To solve this exercise take a matrix which is not upper triangular. Now, by multiplying appropriate $x_{12}(t)$ from, first left and then right, reduce it to of the form $w(s)$. Among other things, this exercise shows that the group is generated by mostly upper triangular matrices provided we throw in the permutation matrix. However, as $t$ is varying over $k$ it seems we need a large number of generating elements. The following exercise says that that is not necessarily the case.

\begin{exercise}
Prove that the group $SL_2(\mathbb F_p)$ is generated by the two of its elements $\begin{pmatrix} 1& 1 \\ &1\end{pmatrix}$ and $\begin{pmatrix} 1&  \\1 &1\end{pmatrix}$.
\end{exercise}
\noindent 

\begin{exercise}
Prove that the commutator subgroup $[SL_2(k), SL_2(k)]=SL_2(k)$ if $|k|\geq 5$.
\end{exercise}
\noindent In the view of earlier exercises it is enough to show that the elementary matrices are commutator.

\begin{exercise} Let $k$ be a field with $|k| \geq 5$.
Show that any group homomorphism $\rho \colon SL_2(k) \rightarrow k^*$ is trivial.
\end{exercise}

\begin{exercise}[Automorphism]
\begin{enumerate}
\item Let $g\in GL_2(k)$. The map $\iota_g \colon SL_2(k) \rightarrow SL_2(k)$ defined by $x\mapsto gxg^{-1}$ is a group automorphism.
\item Show that this gives a group homomorphism $GL_2(k) \rightarrow \Aut(SL_2(k))$ defined by $g\mapsto \iota_g$. What is the kernel of this map?
\item Show that the map $d\colon A\mapsto \tr A^{-1}$ is an automorphism of $SL_2(k)$.
\item Let $\sigma\in \Aut(k)$ be a field automorphism of $k$. Define a map $\sigma \colon SL_2(k) \rightarrow SL_2(k)$ by $\begin{pmatrix} a&b\\c&d\end{pmatrix} \mapsto \begin{pmatrix} \sigma(a)&\sigma(b)\\\sigma(c)&\sigma(d)\end{pmatrix}$. Show that it is an automorphism.
\end{enumerate} 
\end{exercise}
\noindent 
This exercise describes automorphisms of the group $SL_2(k)$. The first kind of automorphisms, $\iota_g$, are called inner automorphism if $g\in SL_2(k)$ and diagonal automorphism otherwise. The automorphism $d$ given by transpose-inverse is called the graph automorphism. This comes from the Dynkin-diagram automorphism. The maps induced by field automorphisms are called field automorphism. It is a theorem of Steinberg and further extended by Humphreys that any abstract automorphism of the group $SL_2(k)$ is of the form $\iota_g d \sigma$, that is, compositions of the above maps. Further, if we consider only algebraic group automorphisms the field automorphisms are not there, as they are not morphism of varieties. 
 
\section{The group $SL_2(\mathbb Z)$} 
The group $SL_2(\mathbb Z)=\left\{X\in M_2(\mathbb Z)\mid \det(X)=1 \right\}$.
Let us recall some transformations from complex analysis. We denote by $\mathbb C^+$, the extended complex plane.
\begin{exercise}
Consider the upper half plane $U=\{z\in \mathbb C \mid Im(z)>0\}\cup \{\infty\}$. Denote $U_1=\{z\in U\mid |z|<1\}$.
\begin{enumerate}
\item[(a)] Show that a M\"{o}bius Transformation $T\colon \mathbb C^+ \rightarrow \mathbb C^+$ defined by $T(z)=\frac{az+b}{cz+d}$ where $a,b,c,d\in \mathbb R$ with the property $ad-bc=1$ maps $U$ to $U$. Thus, the group of M\"{o}bius Transformations, $SL_2(\mathbb Z)$ as well as $SL_2(\mathbb R)$, act on $U$.
\item[(b)] Consider $s\colon U\rightarrow U$ given by $s(z)=-\frac{1}{z}$ and $\tau\colon U\rightarrow U$ given by $\tau(z)=z+1$. Find the image of an element $z\in U_1$ under $s,\tau,s\tau, \tau s, s\tau s$ and $\tau s\tau$. Draw pictures in each case as what these elements do to $U_1$ and $\mathbb C\backslash U_1$.
\item[(c)] Show that $-1$ acts trivially.
\item[(d)] Show that $s^2=1, (s\tau)^3=-1$ and for any integer 
$n>0$,  $(s\tau s)^n\neq 1, (\tau s\tau)^n\neq 1$.
\end{enumerate}
\end{exercise}

\begin{exercise} Using the exercise above solve the following.
\begin{enumerate}
\item Show that the group $SL_2(\mathbb Z)$ is generated by the two matrices $S:=\begin{pmatrix} & -1 \\1&\end{pmatrix}$ and $T:=\begin{pmatrix} & -1\\ 1& 1\end{pmatrix}$. Show that the subgroup generated by $<S,T> = <S,U>$ where $U=\begin{pmatrix} 1& 1\\ & 1\end{pmatrix}$.
\item Use this to prove that the group $PSL_2(\mathbb Z)$ is a free product of $\mathbb Z/2\mathbb Z$ and $\mathbb Z/3\mathbb Z$, i.e., $PSL_2(\mathbb Z)\cong \mathbb Z/2\mathbb Z*\mathbb Z/3\mathbb Z$.
\item Use this to prove that the group $SL_2(\mathbb Z)$ is an amalgamated free product of $\mathbb Z/4\mathbb Z$ and $\mathbb Z/6\mathbb Z$ over $\mathbb Z/2\mathbb Z$, i.e., $SL_2(\mathbb Z)\cong \mathbb Z/4\mathbb Z*_{\mathbb Z/2\mathbb Z}\mathbb Z/3\mathbb Z$.
\end{enumerate}  
\end{exercise}
\noindent Here is a brief idea how to go about it. The elements, $\pm I$, are obtained by computing power of $S$. Then show that the elements of the form $\begin{pmatrix} 1& 1\\ & 1\end{pmatrix}$ can be obtained by computing $S^{-1}T$. Use this to prove that $\begin{pmatrix} 1& t\\ & 1\end{pmatrix}$ matrices can be generated by taking power (including inverse) of the previous one. By multiplying with $-I$ show that any upper triangular matrix can be produced. Now to get a general matrix $\begin{pmatrix} a& b\\ c&d\end{pmatrix}$ we may assume $|a|\geq |c|$. Use division algorithm and multiplication by the upper triangular matrices (from right) to reduce the size of $a$. 

\begin{exercise}
Finite groups are algebraic groups, thus $<S>$ and $<T>$ are algebraic subgroups of $SL_2(\mathbb C)$. However, the subgroup $<S,T>=SL_2(\mathbb Z)$ is not an algebraic subgroup. 
\end{exercise}
Note that this exercise also points out that subgroup generated by two finite order elements could be infinite.

\section{Topological group $SL_2(\mathbb R)$}
We consider $M_2(\mathbb R)$ as Euclidean space $\mathbb R^4$ with usual metric topology. 
\begin{exercise}
Show that the map $\det \colon M_2(\mathbb R) \rightarrow \mathbb R$ given by $A\mapsto \det(A)$ is a continuous map.  
\end{exercise}

\begin{exercise}
The set $GL_2(\mathbb R)$ is an open set and $SL_2(\mathbb R)$ is a closed set.
\end{exercise}

\begin{exercise}
\begin{enumerate}
\item Show that $SL_2(\mathbb R)$ is connected. (You can use the idea of Chevalley generators from previous section which gives one-parameter subgroups.)
\item Show that $SL_2(\mathbb R)$ is not compact.
\end{enumerate}
\end{exercise}

\begin{exercise}
Show that $SO_2(\mathbb R)$ is a maximal compact connected subgroup of $SL_2(\mathbb R)$.
\end{exercise}

\begin{exercise}[Iwasawa Decomposition]
Let $A=\begin{pmatrix} a& b \\ c& d\end{pmatrix} \in SL_2(\mathbb R)$. Use Gram-orthonormalisation process on the row vectors of the matrix $A$ to convert it to orthonormal vectors. This idea will prove the following results:
\begin{enumerate}
\item Write $A=PS$ where $P=\begin{pmatrix} \alpha & \beta \\ & \delta \end{pmatrix}$ with $\alpha, \delta > 0$ and $S\in O_2(\mathbb R)$. 
\item This shows that as a topological space $SL_2(\mathbb R) \cong \mathbb R^{+}\times \mathbb R^{+} \times \mathbb R \times O_2(\mathbb R) $.
\item Using this prove that $SL_2(\mathbb R)$ is connected.
\item Prove that the fundamental group $\pi_1(SL_2(\mathbb R))\cong \mathbb Z$.
\end{enumerate}
\end{exercise}

\section{Conjugacy classes and representations}

\begin{exercise}
Determine the conjugacy classes in $GL_2(\mathbb C), GL_2(\mathbb R), SL_2(\mathbb C), SL_2(\mathbb R)$.
\end{exercise}
\begin{exercise}
Show that the unipotents $\begin{pmatrix} 1&a\\&1\end{pmatrix}$ form one conjugacy class in $GL_2(k)$. However, they form $k^*/{k^*}^2$ many conjugacy classes in $SL_2(k)$. Thus when $k=\mathbb Q$, they form infinitely many conjugacy classes of unipotents.
\end{exercise}

\begin{exercise}
Consider the group $SL_2(\mathbb R)$. This group has a natural representation on $2$-dimensional real vector space $V_2$. We write this, say with basis $\{x,y\}$, as follows: for $g\in SL_2(\mathbb R)$ write $g=\begin{pmatrix} a& b \\c&d\end{pmatrix}$ then, $$g.x=ax+cy \ \ {\rm and}\ \ g.y=bx+dy.$$ 
Now consider a real vector space $V_{n+1}$ for $n\geq 1$ with basis 
$$\{x^n, x^{n-1}y, x^{n-2}y^2,\ldots, xy^{n-1}, y^n\}.$$
We define a representation $\rho_{n+1}\colon SL_2(\mathbb R) \rightarrow GL(V_{n+1})$ as follows: 
$$\rho_{n+1}(g)(x^iy^j):=(g.x)^i(g.y)^j$$
where right hand side is expanded as polynomials. Show that $\rho_{n+1}$ is an irreducible representation.
\end{exercise}
\noindent In this exercise one may consider the vector space $V_{n+1}$ as the space of all homogeneous polynomials of degree $n$. Another way to think about this space is that $V_{n+1}\cong Sym^n(V_2)$, the $n^{th}$ symmetric power. To prove irreducibility one makes use of the two nice subgroups, the diagonals and the compact subgroup $SO_2(\mathbb R)$. 

\begin{exercise}
Compute all conjugacy classes of the group $GL_2(\mathbb F_q)$ and $SL_2(\mathbb F_q)$ and number of elements in each conjugacy class. Make a table for the same.
\end{exercise}

Now we introduce more general version of this group. Let $R$ be a commutative ring with identity. Then $SL_2(R)$ is a group. Further the elementary matrices $x_{12}(t)$ and $x_{21}(t)$ are in $SL_2(R)$ for $t\in R$. The subgroup generated by all of the elementary matrices is called the elementary subgroup $E_2(R)$. In general, $E_2(R)$ is a subgroup of $SL_2(R)$. One of the questions is to determine for what $R$ they are equal. Certainly this is the case when $R$ is a field. 

\section{$SL_2(\mathbb Q)$}
Let $K=\mathbb Q(\sqrt d)$ be a degree two field extension of $\mathbb Q$ where $d$ is a squarefree integer. Denote by $K^1=\{x\in K \mid N(x)=1\}$ where $N(a+b\sqrt d)=a^2-b^2d$.
\begin{exercise}
Show that the map $\phi\colon K^* \rightarrow GL_2(\mathbb Q)$ defined by $a+b \sqrt d \mapsto \begin{pmatrix} a & bd \\b& a  \end{pmatrix}$ is an injective group homomorphism.
\end{exercise}

\begin{exercise}
Show that the map $\phi \colon K^1 \rightarrow SL_2(\mathbb Q)$ defined by $a+b \sqrt d \mapsto \begin{pmatrix} a & bd \\b& a  \end{pmatrix}$ is an injective group homomorphism.
\end{exercise}

\begin{exercise}
 Show that all elements of $K^*$ (identified with the image via $\phi$) are diagonalisable in $GL_2(\mathbb C)$.
\end{exercise}

\begin{exercise}
Show that $\phi(K^*)$ is a centraliser of an element. 
\end{exercise}

\begin{exercise}
 Let $K$ and $L$ be two non-isomorphic field extensions of $\mathbb Q$. Show that the image subgroups of $K^*$ and $L^*$ are not conjugate in $GL_2(\mathbb Q)$. 
\end{exercise}

\chapter{$GL_n(k)$ and $SL_n(k)$}

Let $k$ be a field. Let $V$ be a vector space of finite dimension over $k$, say dimension of $V$ is $n$. By fixing a basis $\mathcal B=\{e_1,\ldots, e_n\}$ we can identify $V$ with column vectors $k^n$. We denote the set $GL(V)$ (general linear group) by the set of all invertible linear transformation. We know that using a fixed basis we can represent any element of $GL(V)$ be an invertible matrix, thus we can identify $GL(V)$ with $GL_n(k)$. We wish to study $GL_n(k)$ as a group. The group $SL(V)$ (special linear group) is the linear transformations of determinant $1$ and are written in matrix form as $SL_n(k)$.
The identity matrix is usually written as $I$ and $e_{i,j}$ represents the matrix with $1$ at $ij^{th}$ place and $0$ elsewhere.

\section{Some actions} 
\begin{exercise}
The group $GL(V)$ acts on $V$ by evaluation. Show that there are exactly two orbits $\{0\}$ and $V\backslash \{0\}$. 
\end{exercise}
The set of all one dimensional subspaces of $V$ is written as $\mathbb P(V)$, called projective space. This can be defined in another way as follows. Define an equivalence relation on $V\backslash \{0\}$ by declaring $v$ is related to $w$ if there exists an scalar $\lambda \in k^*$ such that $v=\lambda w$. Then the set $\mathbb P(V)$ is all equivalence classes. 
\begin{exercise}
Show that $GL(V)$ acts on $\mathbb P(V)$ transitively. This action has kernel which is the center of the groups. 
\end{exercise}
\begin{exercise}
Show that a matrix $A\in M_n(k)$ is invertible if and only if its row vectors form a basis of $k^n$ if and only if its column vectors form a basis of $k^n$. 
\end{exercise}
\begin{exercise}
Let $k=\mathbb F_q$ be the finite field with $q$ elements. 
\begin{enumerate}
\item Show that $|GL_n(\mathbb F_q)| = (q^n-1)(q^n-q)(q^n-q^2)\cdots (q^n-q^{n-1})$.
\item Compute the size of $|\mathbb P(\mathbb F_q^{n})|$?
\end{enumerate}
\end{exercise}
\noindent Now we introduce some more groups: projective general linear group 
$$PGL_n(k) = \frac{GL_n(k)}{\mathcal Z(GL_n(k))} $$
and projective special linear group 
$$PSL_n(k) = \frac{SL_n(k)}{\mathcal Z(SL_n(k))}.$$
\begin{exercise}
Show that the center $\mathcal Z(GL_n(k)) =\{\lambda.I_n \mid \lambda \in k^*\} \cong k^*$ and  $\mathcal Z(SL_n(k)) =\{\lambda.I_n \mid \lambda \in k^*, \lambda^n=1\}$.
\end{exercise}
\begin{exercise}
\begin{enumerate}
\item Show that the group $PGL_n(k)$ acts transitively on the set of all $(n+1)$-tuples of points from $\mathbb P(k^n)$.   
\item Show that $PGL_n(k)$ is $2$-transitive on the points of $\mathbb P(k^n)$.
\item Show that $PGL_2(k)$ is $3$-transitive on $\mathbb P(k^1)$.
\end{enumerate}
\end{exercise}
\begin{exercise}
Let $G=GL_n(k)$ and $H=SL_n(k)$. Show that $H$ is a normal subgroup of $G$. Consider the subgroup $K=\{\diag(\lambda, 1\ldots,1)\}$. Show that $G$ is a semi-direct product of $H$ and $K$. Thus, the following short exact sequence is split:
$$1\rightarrow SL_n(k) \rightarrow GL_n(k) \stackrel{\det}{\rightarrow} k^* \rightarrow 1.$$
\end{exercise}

\begin{exercise}
Let $G=GL_n(\mathbb F_p)$ where $p$ is a prime.
\begin{enumerate}
\item What is the $|G|$?
\item Show that the set of upper triangular matrices with all diagonal entries $1$ form a Sylow $p$-subgroup of the group $GL_n(\mathbb F_p)$.
\item Show that $G$ has $(1+p)(1+p+p^2)(1+p+p^2+p^3)\cdots (1+p+\cdots+p^{n-1})$ Sylow $p$-subgroups.
\end{enumerate}
\end{exercise}
For the above exercise, consider $U$ as a Sylow $p$-subgroup consisting of  upper triangular matrices with all diagonals $1$. Consider its normalizer $N_G(U)$. Show that $N_G(U)=B$, the set of all upper triangular matrices. Then the number $n_p=|G|/|N_G(U)|=|G|/|B|$.

\begin{exercise}[Grassmanian]
Let $V$ be a vector space over a field $k$ of dimension $n$. Consider the set of all $r$-dimensional subspaces denoted as $G^r(V)$. 
\begin{enumerate}
\item Show that $GL(V)$ acts on $G^r(V)$ given by $g.W=g(W)$ transitively. 
\item Determine the stabiliser. Can we restrict this action to $SL(V)$?
\item Show that the number of elements in $G^r(\mathbb F_q^n)$ is 
$$\frac{(q^n-1)(q^n-q)\cdots (q^n-q^{r-1})}{(q^r-1)(q^r-q)\cdots (q^{r}-q^{r-1})}.$$ 
\item Show that $|G^r(\mathbb F_q^n)| = |G^{n-r}(\mathbb F_q^n)|$. 
\end{enumerate}
\end{exercise}
\noindent The space $G(V)=\bigoplus G^r(V)$ is called Grassmanian. It is a projective variety.

\begin{exercise}
Let $V$ be a vector space and $V^*$ its dual space. For a subspace $W$ of $V$ define $W^*=\{f\in V^*\mid f(w)=0\forall w\in W\}$. Show that $W\mapsto W^*$ is a inclusion reversing one-one map between subspaces of $V$ and $V^*$.
\end{exercise}

\begin{exercise}[Cayley-Hamilton theorem]
\begin{enumerate}
\item[(a)] Let $A=(a_{ij})$ be a $n\times n$ upper triangular matrix, i.e., $a_{ij}=0$ for all $i>j$. Find its characteristic polynomial $\chi_A(X)$.
\item[(b)] Prove the Cayley-Hamilton theorem for an upper triangular matrix $A$ (by direct calculation), that is, $\chi_A(A)\equiv 0$.
\item[(c)] Use the part (b) to prove Cayley-Hamilton theorem for any complex matrix $S$, i.e., show that $\chi_S(S)\equiv 0$.
\item[(d)] Let $R$ be a commutative ring with $1$. Let $S\in M_n(R)$. Let $\chi_S(x)=\det(x.I-S)$ be the characteristic polynomial over $R$. Show that $\chi_S(S)\equiv 0$. 
\end{enumerate}
\end{exercise}

\section{Bruhat Decomposition}
Denote the subgroup of all upper triangular matrices in $GL_n(k)$ by $B$. It is also called a Borel subgroup. The set of permutation matrices obtained by permuting the rows of identity matrix is a copy of the symmetric group in $GL_n(k)$. Then,
\begin{theorem}
The double coset decomposition
$$B\backslash GL_n(k) / B = \bigsqcup_{w\in S_n} BwB $$
where $S_n$ denotes the set of all permutation matrices. 
\end{theorem}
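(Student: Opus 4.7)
The plan is to prove the two halves separately: every $g \in GL_n(k)$ lies in $BwB$ for some permutation $w$ (existence), and the double cosets $BwB$ for distinct permutations are disjoint (uniqueness).

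For existence I would argue by induction on $n$, imitating Gaussian elimination. The key observation is that left multiplication by a matrix in $B$ performs row operations of the form ``add a multiple of a lower row to a higher row'' plus row scaling, while right multiplication by $B$ performs column operations ``add a multiple of an earlier column to a later column'' plus column scaling. Given $g \in GL_n(k)$, its first column is nonzero; let $i$ be the largest row index with $g_{i,1} \neq 0$. Scale row $i$ to normalize $g_{i,1} = 1$, kill the entries above $g_{i,1}$ in column $1$ by left multiplication with upper-triangular elementary matrices $E_{k,i}(-g_{k,1})$ for $k < i$, and kill the entries to the right of $g_{i,1}$ in row $i$ by right multiplication with $E_{1,m}(-g_{i,m})$ for $m > 1$. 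Now row $i$ and column $1$ each contain a single $1$ at position $(i,1)$ with zeros elsewhere. Apply the inductive hypothesis to the $(n-1)\times(n-1)$ submatrix obtained by deleting row $i$ and column $1$; operations on this submatrix lift to upper-triangular operations on the full matrix because the relative row and column orderings are preserved when one deletes a row or column. After the reduction one reads off $g = b_1 w b_2$ with $w$ a permutation matrix.

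For uniqueness I would introduce a family of invariants on $BgB$ that distinguishes permutation matrices. For $0 \le i, j \le n$, define $r_{i,j}(g)$ to be the rank of the submatrix of $g$ consisting of the last $i$ rows and the first $j$ columns. This is invariant under $g \mapsto b_1 g b_2$: the bottom-right $i \times i$ block of $b_1$ is upper triangular with nonzero diagonal hence invertible, and the last $i$ rows of $b_1 g$ are the last $i$ rows of $g$ left-multiplied by this block, so the rank of any submatrix built from those rows is preserved; symmetrically, right multiplication by $b_2$ alters the first $j$ columns by the invertible top-left $j \times j$ block of $b_2$. For a permutation matrix $w$ corresponding to $\sigma \in S_n$, a direct count gives $r_{i,j}(w) = |\{k \le j : \sigma(k) \ge n-i+1\}|$, and the mixed second difference $r_{i,j} - r_{i-1,j} - r_{i,j-1} + r_{i-1,j-1}$ is the indicator of $\sigma(j) = n-i+1$, so the table $\{r_{i,j}(w)\}$ determines $\sigma$ uniquely. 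Hence $BwB = Bw'B$ forces $w = w'$.

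The main obstacle is the disjointness: one has to hit on the correct shape for the invariant $r_{i,j}$, namely \emph{last} rows to match left multiplication (because $B$ acts on the bottom rows via an invertible block) and \emph{first} columns to match right multiplication, and then verify that these ranks really do separate the permutation matrices. The existence half, by contrast, is a mechanical elimination once the order of operations is committed to.
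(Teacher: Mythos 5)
Your proof is correct. The paper actually states this theorem without proof; the closest it comes is the Gaussian-elimination exercise in the same section, whose part (2) asserts that using only the operations $x_{i,j}(t)$ with $i<j$ one can reduce any invertible matrix to the form $U_1WU_2$ with $W$ monomial --- which is exactly your existence argument (your normalization $g_{i,1}=1$ absorbs the diagonal part of the monomial matrix into $B$, leaving a genuine permutation matrix, and your observation that the lifted operations cannot disturb the cleared row and column, since those are zero away from the pivot, closes the induction). The disjointness half is not addressed in the paper at all, and your corner-rank invariants $r_{i,j}$ supply it correctly: left (resp.\ right) multiplication by $B$ replaces the last $i$ rows (resp.\ first $j$ columns) by an invertible linear combination of themselves, so the ranks are constant on each double coset, and the mixed second difference recovering the indicator of $\sigma(j)=n-i+1$ shows these ranks separate the permutation matrices. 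Your write-up is therefore more complete than the source.
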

\noindent Denote by $T$ the set of all diagonal matrices in $GL_n(k)$. It is called a maximal torus.
\begin{exercise}
\begin{enumerate}
\item Prove that the normaliser of $T$ is the set of all monomial matrices. Show that $\mathcal Z_{GL_n(k)}(T) = T$. 
\item Show that $N_{GL_n(k)}(T)/T \cong S_n$.
\end{enumerate}
\end{exercise}

Let us recall the traditional row-column operations. There are three row operations and three column operations each. The row operation $R1$ is to multiply an $i^{th}$ row by some $t\in k$ and add to $j^{th}$ row where $i\neq j$. The row operation $R2$ interchanges an $i^{th}$ row with $j^{th}$ for $i\neq j$. The row operation $R3$ multiplies a row by a scalar $\lambda\in k^*$. Similarly, there are three column operations $C1, C2$ and $C3$ analogous to the row operations.
Let $k$ be a field and for $t\in k$, define $x_{i,j}(t)=I+te_{i,j}\in GL_n(k)$ for $i\neq j$ where $e_{i,j}$ represents the matrix with $1$ at $ij^{th}$ place and $0$ elsewhere. 
\begin{exercise}
\begin{enumerate}
\item Show that the multiplication by $x_{i,j}(t)$ from left (right) to a matrix $A$ is the row (column) operation $R1$ ($C1$).
\item Show that the multiplication by a permutation matrix from left (right) is the row operation $R2$ ($C2$).
\item Show that the multiplication by a diagonal matrix $\diag(1,\ldots, 1, \lambda, 1, \ldots, 1)$ where $\lambda$ is at $i^{th}$ place, from left (right) is the row (column) operation $R3$ ($C3$). 
\item Let $A$ be an invertible matrix. Show that by using the row-column operations we can reduce it to identity. 
\item Show that inverse of row-column operations are themselves. This algorithm is used to compute inverse of a matrix.
\end{enumerate}
\end{exercise}
\noindent However, now we allow only the first row and column operations $R1$ and $C1$. 
\begin{exercise}
By computing $n_{ij}(1)=x_{ij}(1)x_{ji}(-1)x_{ij}(1)$ show that we can still do the $R2$ and $C2$ (well ! almost).
\end{exercise}
\noindent Thus we introduce our new row-column operations $NR2$ and $NC2$ where we multiply by the above matrices $n_{ij}$ replacing $R2$ and $C2$. 
\begin{exercise}
By computing $h_{ij}=n_{ij}(t)n_{ij}(-1)$ where $n_{ij}(t)=x_{ij}(t)x_{ji}(-t^{-1})x_{ij}(t)$ show that $h_{ij}(t)$ is a diagonal matrix with $t$ at $i^{th}$ place and $t^{-1}$ at $j^{th}$ place. Use this to show we can still do $R3$ and $C3$, again almost.
\end{exercise}
\noindent Now, we introduce new row-column operations $NR3$ and $NC3$ where we multiply by the above matrices $h_{ij}(t)$ replacing $R3$ and $C3$. 
\begin{exercise}[Gaussian Elimination]
Now we have row-column operations as $R1, NR2, NR3$ and $C1, NC2, NC3$. 
\begin{enumerate}
\item Show that, using row-column operations, any invertible matrix $A$ can be reduced to a diagonal matrix $\diag(1,\ldots,1,\det(A))$.
\item Show that if we use row-column operations $x_{i,j}(t)$ for $i<j$, that is only half of the $R1$ and $C1$, then any invertible matrix $A$ can be written as $U_1WU_2$ where $U_1$ and $U_2$ are upper triangular matrices and $W$ is a monomial (permutation of a diagonal) matrix.
\item Show that any matrix $A$ (not necessarily invertible) can be reduced to a diagonal matrix of kind $\diag(1,\ldots,1,\det(A))$ or $\diag(1,\ldots,1,0,\ldots,0)$ by row-column operations. Determine the number of non-zeros on the diagonal.
\item Determine when an invertible matrix can be written as product of an upper triangular matrix and a lower triangular matrix?
\item Prove that $SL_n(k)$ is generated by the elementary matrices $x_{ij}(t)$ where $i\neq j$ and $t\in k$.
\end{enumerate}
\end{exercise}

\begin{exercise}
Let $N\in M_n(k)$ be a nilpotent matrix. Show that $\exp(N):=\sum_{r\geq 0} \frac{N^r}{r!}$ has only finitely many terms and is an invertible matrix. 
\end{exercise}
\begin{exercise}
Consider the set $sl_n(k)=\{A\in M_n(k) \mid trace(A)=0\}$. Compute the exponentials of $e_{ij}$ where $i\neq j$. 
\end{exercise}

\section{Parabolic subgroups}
Now we introduce the parabolic subgroups.
Let $V$ be a vector space over field $k$ of dimension $n$. A flag $\mathcal F =(V_0, V_1, V_2, \cdots, V_r)$ is a strictly increasing sequence of subspaces satisfying 
$$0=V_0 \subsetneq V_1 \subsetneq V_2\subsetneq \cdots \subsetneq V_i\subsetneq V_{i+1} \subsetneq \cdots \subsetneq V_r=V.$$
We say that this flag is of length $r$ (this counts the number of proper inclusions). To a flag $\mathcal F =(V_0, V_1, \cdots, V_r)$ we associate a sequence of positive integers $(n_1, n_2, \ldots, n_r)$ such that $\dim(V_i)=n_1+n_2\cdots +n_i$. Note that this sequence is a partition of $n$. The flag $(0, V)$ is called the trivial flag. It has length $1$ and corresponds to partition $n=n$. A flag is said to be complete if it has length $n$ and corresponds to the partition $n=1+1+\cdots +1$. We count unordered partitions in the sense that the partitions $1+2$ and $2+1$ of $3$ are distinct. Now, we fix a basis of $V$, say, $\{v_1,v_2,\ldots, v_n\}$. Then to each partition of $n=n_1+n_2+\cdots +n_r$ we associate a standard flag as follows: We define the subspace $V_i=<v_1,\ldots, v_{n_1+\cdots+n_i}>$. Conversely, every flag can be thought of this way with respect to some basis. Let us denote by $\mathfrak F$, the set of all flags.
\begin{exercise}
\begin{enumerate}
\item Show that $GL(V)$ acts on the set of all flags $\mathfrak F$ given by $g(V_1, V_2, \cdots, V_r)= (gV_1, gV_2, \cdots, gV_r)$.
\item Show that each orbit contains a standard flag. Hence the orbits are in one-one correspondence with partitions. How many orbits are there?
\item Determine the stabilisers in each case. Note that it is enough to determine this for the standard flag. We get the stair-case subgroups which in block matrix form looks like this
$$P_{(n_1,\ldots, n_r)} = \begin{pmatrix} GL_{n_1} & * & *& \cdots &* \\ 0 &GL_{n_2} & *&\cdots & * \\ \vdots & 0&\ddots & & *\\ \vdots & \vdots&&\ddots  & *\\0 & 0  &\cdots& 0& GL_{n_r} \end{pmatrix}.$$
This subgroup is called standard parabolic subgroups of $GL_n$.
\end{enumerate}  
\end{exercise}
\noindent The stabiliser of a flag is called a parabolic subgroup. The parabolic subgroup obtained by stabiliser of a standard flag is called standard parabolic. All other parabolics are conjugate of standard parabolics. The parabolic corresponding to a complete flag is called a Borel subgroup. Note that the standard Borel subgroup is the set of all upper triangular matrices. Further, every parabolic contains a Borel subgroup. 

\begin{exercise}[Levi decomposition]
Show that the standard parabolic subgroups have a following decomposition:
\begin{eqnarray*}
P_{(n_1,\ldots, n_r)}&=& \begin{pmatrix} GL_{n_1} & * & *& \cdots &* \\ 0 &GL_{n_2} & *&\cdots & * \\ \vdots & 0&\ddots & & *\\ \vdots & \vdots&&\ddots  & *\\0 & 0  &\cdots& 0& GL_{n_r} \end{pmatrix} \\
&=& \begin{pmatrix} GL_{n_1} & 0 & 0& \cdots &0 \\ 0 &GL_{n_2} & 0&\cdots & 0 \\ \vdots & 0&\ddots & & 0\\ \vdots & \vdots&&\ddots  & 0\\0 & 0  &\cdots& 0& GL_{n_r} \end{pmatrix}  \ltimes          \begin{pmatrix} I_{n_1} & * & *& \cdots &* \\ 0 & I_{n_2} & *&\cdots & * \\ \vdots & 0&\ddots & & *\\ \vdots & \vdots&&\ddots  & *\\0 & 0  &\cdots& 0& I_{n_r} \end{pmatrix}.
\end{eqnarray*}
This is obtained by defining a group homomorphism $P_{(n_1,\ldots,n_r)} \rightarrow \prod_{i=1}^r GL_{n_i}$. 
\end{exercise}
\noindent This kind of decomposition is not unique. We have simply exhibited one such. For example, in the case of Borel this amounts to fixing a maximal torus.

\section{Embedding field extensions in $GL_n(k)$}
Let $K$ be a field extension of $k$ of degree $n$. That is to say, considered $K$ over $k$ as a vector space is of dimension $n$. Let $\alpha\in K$. We define left multiplication map $l_{\alpha} \colon K \rightarrow K$ defined by $x\mapsto \alpha x$.
\begin{exercise}
\begin{enumerate}
\item What is the relation between minimal polynomial of the element $\alpha$ and the minimal polynomial of linear map $l_{\alpha}$?
\item What is the relation between the traces of $\alpha$ and $l_{\alpha}$?
\item What is the relation between the norm of $\alpha$ and determinant $l_{\alpha}$?
\end{enumerate}
\end{exercise}
\begin{exercise}
Prove that the map $\mathbb C \rightarrow M_2(\mathbb R)$ defined by $a+ib \mapsto \begin{pmatrix} a & -b \\ b&a\end{pmatrix}$ is an injective ring homomorphism.
\end{exercise}

\begin{exercise}[Trace Bilinear form]
Define the bilinear form on the $k$-vector space $K$ as follows: $B\colon K\times K \rightarrow k$ by $B(x,y)=trace(l_xl_y)$.
\begin{enumerate}
\item Show that $B$ is a symmetric bilinear form.
\item Show that $B$ is non-degenerate if and only if $K$ is a separable extension over $k$. 
\end{enumerate}
\end{exercise}
\noindent Since all field extensions of characteristics $0$ are separable the trace bilinear form is always non-degenerate. It's the case for finite extensions of finite fields. 
\begin{exercise}
Take the example of field $\mathbb F_p(t)$ and consider the extension given by splitting the polynomial $X^p-t$. Show that the trace bilinear form is degenerate.
\end{exercise} 

\section{Metric Topology}

Consider the set $M_n(\mathbb R)$ with Euclidean topology (identified with $\mathbb R^{n^2})$. 
\begin{exercise}
Decide if $GL_n(\mathbb R)$ and $SL_n(\mathbb R)$ are compact and/or connected. 
\end{exercise}

\begin{exercise}[Iwasawa decomposition]
Use Gram-orthogonalisation to prove that every matrix $A$ in $SL_n(\mathbb R)$ can be written as $A=PS$ where $P$ is an upper triangular matrix with all diagonals $>0$ and $S\in O_n(\mathbb R)$. 
\end{exercise}

\begin{exercise}
Consider $M_n(\mathbb C)$ (as $\mathbb C^{n^2}$) with usual metric topology. 
\begin{enumerate}
\item[(a)] Show that $GL_n(\mathbb C)$ is a dense open subset of $M_n(\mathbb C)$.
\item[(b)] Show that the set of all diagonalisable matrices in $GL_n(\mathbb C)$ is a dense subset.
\end{enumerate}
\end{exercise}

\section{Conjugacy classes in $GL_n(k)$}

When $k$ is an algebraically closed field (for example $k=\mathbb C$ or $\bar{\mathbb F_q}$) the conjugacy classes are determined by Jordan canonical forms. Let us recall this from linear algebra. We define the following: The linear transformations $T$ and $S$ on a finite dimensional vector space $V$ are said to be similar if there exists an invertible linear transformation $P$ such that $PTP^{-1}=S$. We note that if $T$ and $S$ are in $GL(V)$ they are similar if and only if they are conjugate. The theory of Jordan canonical forms uniquely (almost) associates a representative to each conjugacy class. 
\begin{theorem}[Jordan canonical forms]
Recall?
\end{theorem}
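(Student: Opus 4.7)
The plan is to establish the Jordan canonical form theorem in three stages: a primary decomposition into generalized eigenspaces, a reduction to the nilpotent case, and a direct construction of a Jordan basis for nilpotent operators; uniqueness is then extracted from numerical invariants. Throughout, let $T$ be a linear operator on a finite-dimensional vector space $V$ over an algebraically closed field $k$, and recall that by Cayley--Hamilton (proved in the preceding exercise) we have $\chi_T(T)=0$.

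First, since $k$ is algebraically closed, the characteristic polynomial factors as $\chi_T(x)=\prod_{i=1}^r(x-\lambda_i)^{m_i}$ with distinct $\lambda_i$. Setting $p_i(x)=(x-\lambda_i)^{m_i}$, the polynomials $p_i$ are pairwise coprime, so by a B\'ezout argument one writes $1=\sum_i q_i(x)\prod_{j\neq i}p_j(x)$. I would verify that the operators $\pi_i:=q_i(T)\prod_{j\neq i}p_j(T)$ form an orthogonal family of projections summing to the identity, producing the primary decomposition $V=\bigoplus_i V_i$ where $V_i=\ker(T-\lambda_i I)^{m_i}$ is the generalized eigenspace and each $V_i$ is $T$-stable. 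On each $V_i$, the operator $N_i:=T|_{V_i}-\lambda_i I$ is nilpotent with $N_i^{m_i}=0$, so the problem reduces to putting each $N_i$ into Jordan form with eigenvalue $0$ and then adding back the scalar $\lambda_i I$.

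For the nilpotent case, let $N$ be nilpotent on $W$ with smallest $s$ satisfying $N^s=0$, and consider the ascending filtration $0\subsetneq\ker N\subsetneq\ker N^2\subsetneq\cdots\subsetneq\ker N^s=W$. I would construct the Jordan basis top-down: pick vectors $v_1,\ldots,v_{k_s}$ whose classes form a basis of $W/\ker N^{s-1}$, yielding $k_s$ Jordan chains $v_i,Nv_i,\ldots,N^{s-1}v_i$ of length $s$. Inductively, having produced the chains started at higher levels, I would pick additional vectors in $\ker N^{\ell}$ whose classes, together with the classes of the existing chain elements already lying at level $\ell$, form a basis of $\ker N^{\ell}/\ker N^{\ell-1}$. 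The bookkeeping rests on the fact that $N$ induces injections $\ker N^{\ell+1}/\ker N^{\ell}\hookrightarrow\ker N^{\ell}/\ker N^{\ell-1}$, which guarantees that the old chain elements at level $\ell$ remain linearly independent modulo $\ker N^{\ell-1}$ and that room remains to adjoin new chain starts. The resulting collection of chains is a basis of $W$ realizing $N$ as a direct sum of Jordan blocks $J_{d_j}(0)$.

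For uniqueness, the number of Jordan blocks of size $\ge t$ attached to eigenvalue $\lambda$ equals $\dim\ker(T-\lambda I)^{t}-\dim\ker(T-\lambda I)^{t-1}$ minus the analogous quantity at the next level, a function of $T$ alone; hence the multiset of block sizes per eigenvalue is an invariant, and the Jordan form is unique up to reordering blocks. The main obstacle is the combinatorial inductive step in the nilpotent construction: verifying that after extending to the next lower level the augmented system of chains is still linearly independent and spans $\ker N^{\ell}$ requires one to track exactly which subspace of $\ker N^{\ell}/\ker N^{\ell-1}$ has already been occupied by images of higher-level chain vectors, and to use the displayed injection to see that adjoining fresh representatives precisely fills the quotient. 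Everything else is either linear-algebraic housekeeping or a routine verification.
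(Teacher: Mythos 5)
Your outline is a correct and complete route to the Jordan canonical form: primary decomposition via the coprime factors of $\chi_T$ and the associated orthogonal projections, reduction to a nilpotent operator on each generalized eigenspace, the top-down construction of Jordan chains through the kernel filtration using the injections $\ker N^{\ell+1}/\ker N^{\ell}\hookrightarrow \ker N^{\ell}/\ker N^{\ell-1}$, and uniqueness from the dimensions $\dim\ker(T-\lambda I)^{t}$. The paper offers no proof here at all --- the theorem is stated only as a prompt to recall the result --- so there is nothing to compare against directly; the closest material is the later sketch of the additive Jordan decomposition, where the same primary decomposition $V=\bigoplus_{\alpha}\ker(T-\alpha)^{n_{\alpha}}$ is obtained from the minimal polynomial and the Chinese Remainder Theorem rather than from $\chi_T$ and a B\'ezout identity; both work, since Cayley--Hamilton makes $\chi_T$ an annihilating polynomial, and your chain construction supplies the part the paper never addresses. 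One small slip in your uniqueness paragraph: $\dim\ker(T-\lambda I)^{t}-\dim\ker(T-\lambda I)^{t-1}$ already equals the number of Jordan blocks of size at least $t$ for the eigenvalue $\lambda$; subtracting the analogous quantity at level $t+1$ gives the number of blocks of size exactly $t$. Either statistic determines the multiset of block sizes, so your conclusion stands, but the sentence as written conflates the two counts.
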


\begin{exercise}
Let $J_r(\lambda)$ be the $r\times r$ matrix with all diagonals $\lambda$ and $1$ at supper diagonals (above diagonal). 
\begin{enumerate}
\item Find minimal and characteristic polynomial of $J_r(\lambda)$?
\item Find minimal and characteristic polynomial of $J_{\bar r}(\lambda):=J_{r_1}(\lambda)\oplus J_{r_2}(\lambda)\oplus \cdots\oplus J_{r_k}(\lambda)$ where notation $\bar r=(r_1,\ldots,r_k)$ written in decreasing order is a partition.
\item Find minimal and characteristic polynomial of $J:=J_{n_1}(\lambda_1)\oplus J_{n_2}(\lambda_2)\oplus \cdots\oplus J_{n_m}(\lambda_m)$.
\item Find minimal and characteristic polynomial of $J:=J_{\bar n_1}(\lambda_1)\oplus J_{\bar n_2}(\lambda_2)\oplus \cdots\oplus J_{\bar n_m}(\lambda_m)$.
\end{enumerate}
\end{exercise}

The above definitions are extended to matrices. Two matrices $A$ and $B$ are said to be similar if there exists $P\in GL_n(k)$ such that $PAP^{-1}=B$. We note that the similarity relation on $GL_n(k)$ is same as conjugacy relation. However, the next question is to determine the conjugacy classes over any arbitrary field, specially when it is not algebraically closed. Since the eigen values do not exist we can not use the Jordan canonical form theory. Thus, one makes use of Rational canonical forms. 
\begin{exercise}
Recall rational canonical forms?
\end{exercise}

\begin{exercise}
Let $V$ be a vector space over field $k$ of dimension $\geq 2$. 
 \begin{enumerate}
\item If $k=\mathbb C$ every linear transformation has an eigen-vector. Show by an example that this may not be true over $\mathbb R$.
\item Not every linear transformation is diagonalisable. 
\item Over $\mathbb C$, every linear transformation is triangulable. Show by an example that this may not be true over $\mathbb R$. 
\item A linear transformation $T$ is called nilpotent if $T^r=0$ for some $r$. Every nilpotent linear transformation is triangulable. 
\item A linear transformation $T$ is called unipotent if $T-I$ is nilpotent. Every unipotent linear transformation is triangulable. 
\end{enumerate}
\end{exercise}

\begin{exercise}
Let $k$ be an algebraically closed field and $G\subset M_n(k)$ consisting of commuting set of matrices.
\begin{enumerate}
\item If all elements of $G$ are diagonalisable then prove that $G$ is simultaneously diagonalisable.
\item Give an example that this need not be true if $G$ is not a commuting set.
\item In general, show that $G$ is triangulable in such a way that all diagonalisable elements of $G$ become diagonal at the same time.
\end{enumerate}
\end{exercise}

\section{Jordan-Chevalley decomposition}
Let $V$ be a finite dimensional vector space over a field $k$. 
\begin{definition}
An element $X\in \End(V)$ is said to be {\bf semisimple} if it is diagonalisable over $\bar k$. An element $X\in \End(V)$ is said to be nilpotent if $X^r=0$ for some $r$. An element $X\in \End(V)$ is said to be unipotent if $X-1$ is nilpotent.
\end{definition}
\begin{exercise}
Let $X\in \End(V)$. Prove the following,
\begin{enumerate}
\item $X$ is nilpotent if and only if all eigen-values of $X$ are $0$. Prove that any such transformation can be always upper-tiangulaised.
\item $X$ is unipotent if and only if all eigen-values of $X$ are $1$. Prove that any such transformation can be always upper-tiangulaised and it is always invertible. 
\end{enumerate}
\end{exercise}
\begin{exercise}
Let $X\in \End(V)$. Suppose $W$ is an $X$-invariant subspace, i.e., $X(W)\subset W$. Prove the following:
\begin{enumerate}
\item If $X$ is semisimple then $X|_W$ is also semisimple.
\item If $X$ is nilpotent then $X|_W$ is also nilpotent.
\item If $X$ is unipotent then $X|_W$ is also unipotent.
\end{enumerate}
\end{exercise}
\begin{exercise}
Let $X\in \End(V)$. Suppose $W_1$ and $W_2$ are $X$-invariant subspaces such that $V=W_1\bigoplus W_2$. Prove the following:
\begin{enumerate}
\item If $X|_{W_1}$ and $X|_{W_2}$ both are semisimple then $X$ is also semisimple.
\item If  $X|_{W_1}$ and $X|_{W_2}$ both are nilpotent then $X$ is also nilpotent.
\item If $X|_{W_1}$ and $X|_{W_2}$ both are unipotent then $X$ is also unipotent.
\end{enumerate}
\end{exercise}
\noindent We can easily find examples and show that not every linear transformation is of one of the above form. However, it is very interesting to note that over algebraically closed field $k=\bar k$ every transformation is made of using only these transformations. We prove this below. This idea is useful in algebraic groups as when we requre to prove something for a group elementwise we first prove them for semisimple and unipotent elements and then try to guess if we can prove for a general element.
\begin{proposition}[Additive Jordan decomposition]
Let $k=\bar k$. Let $T\in \End(V)$. Then,
\begin{enumerate}
\item There exists $S,N\in \End(V)$ such that $T=S+N$ where $S$ is semisimple and $N$ is nilpotent satisfying the property that they commute, i.e., $SN=NS$.
\item The transformations $S$ and $N$ are unique with the above property.
\item There exists polynomials $p(X)$ and $q(X)$ without constants such that $S=p(T)$ and $N=q(T)$.
\end{enumerate}
\end{proposition}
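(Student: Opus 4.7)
\medskip
\noindent\textbf{Proof proposal.} The plan is to build $S$ and $N$ explicitly as polynomials in $T$ using the primary decomposition of $V$ together with the Chinese Remainder Theorem, and then handle uniqueness via the rigidity of commuting pairs. Write $\chi_T(X) = \prod_{i=1}^m (X-\lambda_i)^{n_i}$ with distinct $\lambda_i \in k$, which exists because $k=\bar k$. Set $V_i := \ker((T-\lambda_i I)^{n_i})$. First I would invoke (or derive from the Cayley--Hamilton theorem, which is available from the earlier exercise) the primary decomposition $V = \bigoplus_{i=1}^m V_i$ into $T$-invariant subspaces on which $(T-\lambda_i I)|_{V_i}$ is nilpotent.

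Next, apply CRT to the pairwise coprime polynomials $(X-\lambda_i)^{n_i}$, together with the extra coprime factor $X$ in the case that $0$ is not an eigenvalue, to produce a polynomial $p(X) \in k[X]$ satisfying
$$p(X) \equiv \lambda_i \imod{(X-\lambda_i)^{n_i}} \text{ for all } i, \qquad p(X) \equiv 0 \imod{X}.$$
If $0 = \lambda_{i_0}$ for some $i_0$, the congruence modulo $X^{n_{i_0}}$ already forces $p$ to have no constant term, making the extra condition redundant. Define $S := p(T)$ and $N := T - S = q(T)$, where $q(X) := X - p(X)$ also has zero constant term. On each $V_i$ the operator $S$ acts as the scalar $\lambda_i$, so $S$ is diagonalizable on every $V_i$, and hence on $V$ by the earlier exercise on direct sums of invariant semisimple pieces. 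On $V_i$, $N$ acts as $T - \lambda_i I$, which is nilpotent of index at most $n_i$, so $N$ is globally nilpotent. Since both $S$ and $N$ are polynomials in $T$, they commute with one another and with $T$, settling parts (1) and (3) simultaneously.

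For uniqueness (2), suppose $T = S' + N'$ is another such decomposition with $S'N' = N'S'$. Because $S'$ and $N'$ commute with their sum $T$, they commute with every polynomial in $T$, in particular with $S = p(T)$ and $N = q(T)$. Rewriting $S - S' = N' - N$ exhibits the same operator as a commuting sum of semisimples on one side and a commuting sum of nilpotents on the other. I would then invoke the facts that a commuting sum of nilpotents is nilpotent (binomial theorem, since they commute), and that a commuting sum of semisimple operators is semisimple, via simultaneous diagonalizability of commuting diagonalizable operators over $\bar k$ — this last point is exactly the content of the earlier exercise on commuting diagonalizable families. Then $S - S'$ is both semisimple and nilpotent, hence zero, giving $S = S'$ and $N = N'$. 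I expect the main obstacle to be the simultaneous-diagonalizability input needed in uniqueness; the construction step is essentially bookkeeping around CRT and the primary decomposition.
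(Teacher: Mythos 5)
Your proposal is correct and follows essentially the same route as the paper: primary decomposition of $V$ into generalized eigenspaces, the Chinese Remainder Theorem to realize $S$ and $N$ as constant-free polynomials in $T$, and the observation that an operator which is both semisimple and nilpotent vanishes for uniqueness. The only cosmetic differences are that you factor the characteristic polynomial where the paper factors the minimal polynomial (either works), and you spell out the commuting-sum lemmas in the uniqueness step that the paper leaves as ``easy.''
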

\noindent We sketch the proof of this in the following exercises.
\begin{exercise}
Let $m_T(X)$ be the minimal polynomial of $T$ and its factorisation over $k=\bar k$ be
$$m_T(X) = \prod_{\alpha\in k} (X-\alpha)^{n_{\alpha}}.$$
Consider $V_{\alpha}=\{v\in V \mid (T-\alpha)^m(v)=0,\ for\ some\ m\geq 1\}$.
\begin{enumerate}
\item Show that $V=\bigoplus_{\alpha\in k} V_{\alpha}$ is a direct sum decomposition. 
\item Show that $V_{\alpha}=ker(T-\alpha)^{n_\alpha}$ and $T(V_{\alpha})\subset V_{\alpha}$.
\item Show that the minimal polynomial of $T_{\alpha}=T|_{V_{\alpha}}$ is $(X-\alpha)^{n_{\alpha}}$.
\item Let us construct $S$ now. Define a transformation $S$ on $V$ by defining it on each $V_{\alpha}$ to be the scalar transformation $\alpha.I_{V_{\alpha}}$, where $I_{V_{\alpha}}$ is identity on $V_{\alpha}$.
\begin{enumerate}
\item Prove that $S$ is semisimple.
\item $S$ and $T$ commute. Check this on each $V_{\alpha}$.
\item Show that $T-S$ is nilpotent on each $V_{\alpha}$.
\end{enumerate}
\item Define $N=T-S$ and prove that it is a nilpotent linear transformation.
\item Thus we have $T=S+N$. Now, show that $N$ and $S$ commute. This prove part 1.
\item Proof of part 2 is easy. Use the fact that only linear transformation which is semisimple and nilpotent both is the $0$ transformation.
\end{enumerate}
\end{exercise}
To complete the proof we still need to prove part 3. Which we do in the following,
\begin{exercise}
\begin{enumerate}
\item Prove that the polynomials $\{(X-\alpha)^{n_{\alpha}}\}$ are coprime. Then use Chinese Remainder Theorem to show that there exists a polynomial $p(X)$ such that $p(X) \equiv \alpha \imod{ (X-\alpha)^{n_{\alpha}}}$ for all $\alpha$ and $p(X) \equiv 0 \imod{X}$.  
\item Show that $S=p(T)$. 
\item Now define $q(X)=X-p(X)$ and show that $q(T)=N$.
\end{enumerate}
\end{exercise}
\begin{exercise}
Show that if $T$ is invertible then $S$ is also invertible. Further show that $S^{-1}$ is a polynomial in $T$. For this use that $p(X)$ and $m_T(X)$ are coprime. 
\end{exercise}
\noindent Using the above ideas conclude the following:
\begin{proposition}[Jordan decomposition]
Let $k=\bar k$. Let $T\in GL(V)$. Then,
\begin{enumerate}
\item There exists $S, U\in GL(V)$ such that $T=SU$ where $S$ is semisimple and $U$ is unipotent satisfying the property that they commute, i.e., $SU=US$.
\item The transformations $S$ and $U$ are unique with the above property.
\item There exists polynomials $p(X)$ and $q(X)$ without constants such that $S=p(T)$ and $U=q(T)$.
\end{enumerate}
\end{proposition}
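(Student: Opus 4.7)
The plan is to bootstrap the multiplicative decomposition from the additive Jordan decomposition already proved. Given $T \in GL(V)$, I will first apply the additive proposition to obtain $T = S + N$ with $S$ semisimple, $N$ nilpotent, $SN = NS$, and both of the form $p(T)$, $q_0(T)$ for polynomials without constant term. Since $T$ is invertible, $0$ is not a root of the minimal polynomial $m_T(X) = \prod_\alpha (X-\alpha)^{n_\alpha}$, so every eigenvalue $\alpha$ is nonzero; the defining CRT congruence $p(X) \equiv \alpha \pmod{(X-\alpha)^{n_\alpha}}$ then shows that $p$ and $m_T$ share no common root, hence are coprime. B\'{e}zout produces $a(X), b(X) \in k[X]$ with $a(X)p(X) + b(X)m_T(X) = 1$, and evaluating at $T$ gives $S^{-1} = a(T) \in k[T]$. (This is exactly the content of the preceding exercise.)

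With $S^{-1}$ in hand, define $U := S^{-1} T$, so $T = SU$ by construction. To see $U$ is unipotent I will rewrite $U = I + S^{-1} N$; because $S^{-1}$ and $N$ are both polynomials in $T$, they commute, and so $(S^{-1} N)^r = S^{-r} N^r = 0$ for $r$ large enough by nilpotency of $N$. Hence $U - I$ is nilpotent, i.e.\ $U$ is unipotent. Moreover $S$ and $U$ commute because both lie in the commutative algebra $k[T]$. For the polynomial expression of part~(3), set $q(X) := X \cdot a(X)$; then $q(T) = a(T)\,T = S^{-1} T = U$, and $q(0) = 0$, so $q$ has no constant term, while $S = p(T)$ is inherited directly from the additive case.

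For uniqueness, suppose $T = S' U'$ is another such decomposition. I will exhibit it as an additive Jordan decomposition and quote uniqueness from the additive proposition. Writing $T = S' + S'(U' - I)$, the commutation $S' U' = U' S'$ gives $S'(U' - I) = (U' - I) S'$, so $(S'(U' - I))^r = (S')^r (U' - I)^r = 0$ for $r$ large enough; hence $S'(U' - I)$ is nilpotent and commutes with the semisimple $S'$. The uniqueness clause of the additive proposition then forces $S' = S$ and $S'(U' - I) = N$, and invertibility of $S$ gives $U' = (S')^{-1} T = S^{-1} T = U$.

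The main point requiring care is the very first step: without knowing that $S$ is invertible and that $S^{-1}$ lies in $k[T]$, one cannot form $U$ inside the commutative algebra generated by $T$, and then neither the commutativity $SU = US$ nor the uniqueness reduction (which crucially rewrites $T = S' + S'(U' - I)$) would go through cleanly. Everything after that ingredient is formal manipulation, so the substantive work has already been absorbed into the cited exercise about $S^{-1}$.
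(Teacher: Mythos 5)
Your proof is correct and follows exactly the route the paper intends: the text presents the multiplicative decomposition as a consequence of the additive one together with the preceding exercise that $S$ is invertible with $S^{-1}=a(T)\in k[T]$ (via coprimality of $p$ and $m_T$), and then $U=S^{-1}T=I+S^{-1}N$ with uniqueness obtained by rewriting $T=S'+S'(U'-I)$ and quoting additive uniqueness. All the details you supply --- the nonvanishing $p(\alpha)=\alpha\neq 0$, the constant-term-free $q(X)=Xa(X)$, and the commutation checks inside $k[T]$ --- match the paper's sketched argument.
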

The importance of this theorem in algebraic groups is this, Chevalley proved that Jordan decomposition  continues to hold for algebraic subgroups of $GL(V)$. Which is to say, that if we have an algebraic subgroup $G$ of $GL(V)$ and $T\in G$ then the components $T$ and $U$ are again in $G$. Further he proved that this is a functorial property, that is, semisimple and unipotent elements gets mapped to semisimple and unipotent elements respectively under algebraic group homomorphisms. For example the classical groups, such as, orthogonal groups and symplectic groups (over $k=\bar k$) are algebaric subgroups. Hence the Chevalley's theorem would imply that the Jordan decomposition of elements in these groups can be done with in those groups. 

\begin{exercise}
\begin{enumerate}
\item Let $char(k)=0$. Show that every finite order element in $GL_n(k)$ is semisimple.
\item Show that the matrices $\begin{pmatrix} \cos(\theta) & \sin(\theta) \\ -\sin(\theta) & \cos(\theta)\end{pmatrix}$ are semisimple in $SL_2(\mathbb C)$.
\end{enumerate}
\end{exercise}
\begin{exercise}
Consider the subgroup $H$ generated by the matrix $\begin{pmatrix} \pi &1 \\ \pi^{-1}\end{pmatrix}$ of $SL_2$. Does $H$ contain semisimple and unipotent parts of each of its elements?
\end{exercise}

\chapter{Root Datum}
In the theory of Lie algebras, we associate a combinatorial data to a semisimple Lie algebra, called {\bf root system}. The root systems are classified using their Dynkin diagram and in turn we classify (semi)simple Lie algebras. In the theory of Linear algebraic groups (over $k=\bar k$) we associate {\bf root datum} to a reductive group. In general root datum is more general than root system.

\section{Root Datum vs Root System}
\begin{definition}[Root Datum]
A root datum $\Psi=(X, R, Y, R^{\vee})$ is a quadrpule where
\begin{enumerate}
\item[(a)] $X$ and $Y$ are free Abelian groups of finite rank with a perfect pairing $<,>\colon X\times Y \rightarrow \mathbb Z$, and,
\item[(b)] $R$ and $R^{\vee}$ are finite subsets of $X$ and $Y$ respectively, along-with a bijection $R\rightarrow R^{\vee}$ denoted by $\alpha\mapsto \alpha^{\vee}$.
\end{enumerate}
This quadrpule satisfies the following properties:
\begin{enumerate}
\item $\forall \alpha, <\alpha, \alpha^{\vee}>=2$.
\item $s_{\alpha}(R)=R$ and $s_{\alpha}^{\vee}(R^{\vee})=R^{\vee}$ for all $\alpha \in R$, where $s_{\alpha}\colon X\rightarrow X$ is in $\Aut(X)$ given by $s_{\alpha}(x)=x-<x,\alpha^{\vee}>\alpha$ and $s_{\alpha}^{\vee}\in \Aut(Y)$ given by  $s_{\alpha}^{\vee}(y)=y-<\alpha,y>\alpha^{\vee}$.
\end{enumerate}
\end{definition}
Recall that a perfect pairing here is a $\mathbb Z$-bilinear map which induces isomorphism $X$ to $Y^*$ when we fix first variable, and $Y$ to $X^*$, when we fix the second variable.
The Weyl group of a root datum is defined as a subgroup $W(\Psi)=<s_{\alpha} \mid \alpha\in R>\subset \Aut(X)$. 
\begin{exercise}
$0\notin R$ and $W(\Psi)$ is finite.
\end{exercise}

\section{Root datum of rank $1$ semisimple algebraic groups}
Let $k$ be an algebraically closed field. Let $G$ be a semisimple algebraic group of rank $1$. There are two such groups: $SL_2(k)$ and $PGL_2(k)$. We try to understand their root datum and how they differ a bit from each other. 

\subsection{$SL_2(k)$} Let us first begin working with the group $SL_2(k)$. We fix the maximal torus $T=\left\{ \begin{pmatrix} t & \\ & t^{-1} \end{pmatrix} \mid t\in k^*\right\}$. The Lie algebra is $sl_2(k)=\{X\in M_2(k) \mid tr(X)=0\}$ and the $Ad$ action is given by $Ad \colon SL_2(k) \rightarrow GL(sl_2(k))$ where $Ad_g(X)=gXg^{-1}$. We restrict this action to $T$ and decompose the Lie algebra $sl_2(k)$ as simultaneous eigen-spaces. 
\begin{exercise}
Compute the character group $X(T)$ of the torus and show that it is isomorphic to $\mathbb Z$ generated by $\chi(\diag(t,t^{-1}))=t$.
\end{exercise}

\begin{exercise}
Show the decomposition $sl_2(k)=<h> \bigoplus <e_{12}> \bigoplus <e_{21}>$ with respect to the $Ad$ action of $T$. Here $h=\diag(1,-1)$. The following computation will be useful to conclude this:
\begin{enumerate}
\item $Ad(\diag(t,t^{-1}))(h)=h$,
\item $Ad(\diag(t,t^{-1}))(e_{12})=t^2e_{12} = 2\chi(t) e_{12}$,
\item $Ad(\diag(t,t^{-1}))(e_{21})=t^{-2}e_{21} = -2\chi(t) e_{21}$.
\end{enumerate}
\end{exercise}
\noindent Hence, $\Phi=\{2\chi, -2\chi\}\subset X(T)=<\chi>\cong \mathbb Z$. Verify that $<\Phi>\cong 2\mathbb Z \subset \mathbb Z$. Now, the question is to determine the co-roots. 
\begin{exercise}
Show that the co-character group $Y(T)$ of the torus is $<\psi>\cong \mathbb Z$ where $\psi\colon \mathbb G_m \rightarrow T$ given by $t\mapsto \diag(t,t^{-1})$. 
\end{exercise}
\begin{exercise}
For the root $\alpha=2\chi$ determine the co-root $\alpha^{\vee}\in Y(T)$ using the condition that it must satisfy $<\alpha, \alpha^{\vee}>=2$. 
\end{exercise}
\noindent Recall that the $<,> \colon X(T)\times Y(T) \rightarrow \mathbb Z$ is defined by $<m\chi, n\psi> = mn$, since $(m\chi)(n\psi)(t) = t^{mn}$. Hence we get $\alpha^{\vee}=\psi$ and $\Phi^{\vee}= <\psi>$. Thus, the root datum for the group $SL_2$ is 
$$(X(T), \Phi, Y(T), \Phi^{\vee}) = (<\chi>, \{2\chi\}, <\psi>, \{\psi\}) = (\mathbb Z, 2\mathbb Z, \mathbb Z, \mathbb Z).$$
Now we determine the root datum for $PGL_2$.

\subsection{$PGL_2(k)$}
The group 
$$PGL_2(k) = \frac{GL_2(k)}{ \mathcal Z(GL_2(k))} = \frac{GL_2(k)}{\{\diag(\lambda, \lambda)\mid \lambda\in k^*\}}.$$ 
We represent the maximal torus by $T=\{\diag(\lambda , 1)\mid \lambda\in k^*\}$ thought of as the image of diagonal torus in $GL_2(k)$ identified as follows: $\diag(\lambda_1, \lambda_2) \mapsto \diag(\lambda_1 \lambda_2^{-1}, 1)$.
\begin{exercise}
Show that the character group $X(T)=<\chi> \cong \mathbb Z$ where the character $\chi$ is given by $\chi(\diag(\lambda_1 \lambda_2^{-1}, 1)) = \frac{\lambda_1}{\lambda_2}$. 
\end{exercise}
The Lie algebra is $pgl_2= M_2(k)/\{\diag(\lambda, \lambda)\mid \lambda\in k\}$ and hence we can take $e_{11}, e_{12}, e_{21}$ as a basis of this Lie algebra.
Use the restriction of $Ad$ map to the maximal torus and do the simultaneous diagonalisation to prove the following.
\begin{exercise}
The root-space decomposition of the Lie algebra is
$$pgl_2 = <h> \bigoplus <e_{12}> \bigoplus <e_{21}>.$$
To prove this verify the following,
$Ad(\diag(\lambda_1, \lambda_2))(e_{11})=e_{11}$ and $Ad(\diag(\lambda_1, \lambda_2))(e_{12})=\frac{\lambda_1}{\lambda_2}e_{12}$.
\end{exercise}
Hence, $\Phi=\{\chi, -\chi\} \subset X(T)$ is the corresponding root system. We also note that $<\Phi> =<\chi> = X(T)\cong \mathbb Z$. Now the question is to find the co-roots.
\begin{exercise}
Show that the co-character group $Y(T)=<\psi>\cong \mathbb Z$ where $\psi\colon \mathbb G_m \rightarrow T$ given by $\psi(\lambda) \mapsto \diag(\lambda, 1)$. 
\end{exercise}
\noindent Now we need to compute the co-root $\chi^{\vee}$. This can be done by the equation $<\chi, a\psi>=2$ gives the value $a=2$. Hence $\chi^{\vee}=2\psi$. Hence the co-roots are $\Phi^{\vee}=\{2\psi, -2\psi\}$. Hence the root datum for the group $PGL_2(k)$ is 
$$(X, \Phi, Y, \Phi^{\vee}) = (<\chi>, \{\chi, -\chi\}, <\psi>, \{2\psi, -2\psi\}) \cong (\mathbb Z, \mathbb Z, \mathbb Z, 2\mathbb Z).$$
This example shows that the root datum of the two groups $SL_2(k)$ and $PGL_2(k)$ are different and they can be distinguished by knowing the root datum.

\section{Root datum of the reductive algebraic group $GL_n$}
In this section we have $k=\bar k$. Some of the exercises might be repetition from previous section but they have been included here for the sake of completeness.

\begin{exercise}
Show that $GL_n(k)$ is a Zariski-closed subset of $k^{n^2+1}$. Thus its an affine algebraic group of dimension $n^2$.
\end{exercise}
\begin{exercise}
\begin{enumerate}
\item Show that the set $D_n(k)$ of all diagonals in $GL_n(k)$ is a maximal torus.
\item Any two maximal tori in $GL_n(k)$ are conjugate.
\end{enumerate}
\end{exercise}
Hint: Since the centraliser of $D_n(k)$ is itself, the first one follows.
\begin{exercise}
The set of all semisimple elements 
$$G_s=\bigcup_{g\in GL_n}g D_n(k) g^{-1} = \bigcup_{S\ maximal\ tori} S. $$
\end{exercise}
\begin{exercise}
The set $N_n(k)$ of all upper triangular matrices with $1$ on diagonal is a maximal connected unipotent subgroup of $GL_n(k)$. In fact, any unipotent subgroup of $GL_n(K)$ can be conjugated to a subgroup of $N_n(k)$. Hence, the set of all unipotents $$G_u=\bigcup_{g\in GL_n} gN_n(k)g^{-1} .$$
\end{exercise}
\begin{exercise}
$$N_{GL_n(k)}(N_n(k)) = T_n(k)$$
where $T_n(k)$ is the set of all upper triangular matrices.
\end{exercise}
\begin{exercise}
Show that the group $T_n(k)\cong D_n(k) \ltimes N_n(k)$.
\end{exercise}
\begin{exercise}[Weyl group]
Consider $T=D_n(k)$, a maximal torus. Then,
\begin{enumerate}
\item $\mathcal Z_{GL_n(k)}(T)=T$.
\item $N_{GL_n(k)}(T)$ is the set of all monomial matrices.
\item The Weyl group 
$$W(GL_n, T)=\frac{N_{GL_n}(T)}{\mathcal Z_{GL_n}(T)} \cong S_n.$$
\item The Weyl group is independent of choice of a maximal torus.
\end{enumerate}
\end{exercise}
\noindent Recall that a matrix is said to be monomial if its each row and each column has exactly one non-zero entry. It can be also thought of as permutations of a diagonal matrix.

We recall the notation of unordered partition. A partition of $n$ is a sequence $(n_1,\ldots, n_r)$ such that $n=n_1+n_2+\cdots +n_r$. For us the partition $(1,2)$ and $(2,1)$ of $3$ are different.

\begin{exercise}
For a partition $n=n_1+n_2+\cdots + n_r$ consider the set 
$$T_{(n_1,\ldots, n_r)}=\{\diag(\lambda_1,\ldots, \lambda_1, \ldots, \lambda_r,\ldots, \lambda_r)\mid \lambda_1,\ldots, \lambda_r\in k^*\}$$ of diagonal matrices where $\lambda_i$ repeats $n_i$ many times. 
\begin{enumerate}
\item Show that $T_{(n_1,\ldots, n_r)}$ is a torus.
\item Any torus is conjugate to a subtorus of this form. 
\end{enumerate}
\end{exercise}

\begin{exercise}
Show that
$$\mathcal Z_{GL_n}(T_{(n_1,\ldots, n_r)}) \cong GL_{n_1}\times \cdots \times GL_{n_r}.$$
\end{exercise}
\begin{exercise}
\begin{enumerate}
\item Compute the normaliser $N_{GL_n(k)}(T_{(n_1,\ldots, n_r)})$.
\item Compute the quotient group 
$$W(GL_n, T_{(n_1,\ldots, n_r)})=\frac{N_{GL_n}(T_{(n_1,\ldots, n_r)})}{\mathcal Z_{GL_n}(T_{(n_1,\ldots, n_r)})}.$$
\end{enumerate}
\end{exercise}

For $i\neq j$ define the $ij^{th}$-elementary matrices $x_{ij}(t)=I + te_{ij}$.
\begin{exercise}[Root subgroups]
Show that the map $\psi_{ij} \colon \mathbb G_a \rightarrow GL_n(k)$ given by $t\mapsto x_{ij}(t)$ is a morphism of algebraic groups. 
\end{exercise}
\begin{exercise}
The group $SL_n(k)$ is generated by the set of all elementary matrices $\{x_{ij}(t) \mid i\neq j, t\in k\}$. In fact, the Gaussian elimination algorithm using row-column operations provides a proof of this.
\end{exercise}
\begin{exercise}
The commutator subgroup $[GL_n(k), GL_n(k)]=SL_n(k)$.
\end{exercise}
\begin{exercise}[Lie Algebra]
The set of all matrices $gl_n(k)$ and the set of all trace $0$ matrices $sl_n(k)$ are the Lie algebra of the group $GL_n(k)$ and $SL_n(k)$ respectively.
\end{exercise}
\begin{exercise}
Fix $i\neq j$. Show that the map $\phi_{ij}\colon SL_2(k)\rightarrow GL_n(k)$ defined by extending 
$$\begin{pmatrix} 1& t \\ &1\end{pmatrix} \mapsto x_{ij}(t),\hskip10mm  \begin{pmatrix} 1&  \\ t &1\end{pmatrix} \mapsto x_{ji}(t)$$
is a morphism of algebraic groups.
\end{exercise}
\begin{exercise}
With the notation as above,
\begin{enumerate}
\item Compute $n_{ij}(t)=x_{ij}(t)x_{ji}(-t^{-1})x_{ij}(t)$ and show that $n_{ij}(t)\in N_{GL_n}(D_n)$.
\item Compute $h_{ij}(t)=n_{ij}(t)n_{ij}(-1)$ and show that it belongs to $D_n(k)$.
\item The elements $n_{ij}:=n_{ij}(1)$ gives all possible row permutations and hence generates the Weyl group $W=\frac{N_{GL_n}(D_n)}{D_n}$.
\end{enumerate}
\end{exercise}
\begin{exercise}[Bruhat decomposition]
Let $G=GL_n$ and $B=T_n(k)$ be a Borel subgroup. Show that the double coset decomposition of $G$ by $B$ is 
$$G=\bigcup_{w\in S_n} BwB.$$
\end{exercise}
\noindent Notice that, in this case the double cosets have a group structure induced from the corresponding Weyl group. In general this is not true for an arbitrary finite group.

A character of an algebraic group $G$ is a morphism of algebraic group $\chi\colon G \rightarrow \mathbb G_m$. 
\begin{exercise}
\begin{enumerate}
\item Show that the determinant map is a character on $GL_n$.
\item Show that the character group $X(GL_n)\cong \mathbb Z$.
\end{enumerate}
\end{exercise} 

\begin{exercise}
Show that the set of all flags $\mathfrak F$ is a projective variety and hence complete. Thus the quotient $GL_n/P_{(n_1,\ldots,n_r)}$ is a complete variety. Hence the parabolic subgroups $P_{(n_1,\ldots, n_r)}$ are parabolic in algebraic group sense.
\end{exercise}

\begin{exercise}[Levi decomposition]
The parabolic subgroups $P_{(n_1,\ldots, n_r)}$ have decomposition $L_P\ltimes U_P$ where $L_P$ is called a Levi component which is a reductive group and $U_P$ is the unipotent radical of $P_{(n_1,\ldots,n_r)}$.
\end{exercise}
\begin{exercise}[Perfect Pairing]
Consider the maximal torus $T=D_n(k)$. 
\begin{enumerate}
\item The character group $X(T)=<\chi_1,\ldots, \chi_n> \cong \mathbb Z^n$ where $\chi_i\colon T \rightarrow \mathbb G_m$ is given by $\chi(\sum a_ie_{ii})=a_i$.
\item The co-character group $Y(T)=<\psi_1, \ldots, \psi_n> \cong \mathbb Z^n$ where $\psi_j\colon \mathbb G_m \rightarrow T$ given by $\psi_j(t)=te_{jj}$.
\item The pairing of $X(T)$ and $Y(T)$ is a map induced from the map 
$$< , >\colon X(T) \times Y(T) \rightarrow \Aut(\mathbb G_m) \text{\ given by\ } (\chi,\psi)\mapsto \chi\psi .$$ Thus, computing with the above notation this gives the map $<,>\colon \mathbb Z^n \times \mathbb Z^n \rightarrow \mathbb Z$ by $<(a_1,\ldots,a_n), (b_1,\ldots b_n)> = \sum a_ib_i$.
\end{enumerate}
\end{exercise}

\begin{exercise}\label{weylgroup}
The map $w\colon W(GL_n, T) \rightarrow \Aut(X(T))$ given by $n\mapsto w_n$ defined by 
$$(w_n(\chi))(t)=\chi(n^{-1}tn)$$
is an injective map.
\end{exercise}

\begin{exercise}
We consider $Ad$ action of $GL_n(k)$ on $M_n(k)$ as follows: $Ad_g(X)=gXg^{-1}$. The maps $Ad_g$ are vector space isomorphisms and hence we have $Ad\colon GL_n(k) \rightarrow GL(M_n(k))$. In fact, they are algebra isomorphism of $M_n(k)$.
\end{exercise}
\noindent We remark that the Lie algebra of the algebraic group $GL_n(k)$ is the set of all matrices $M_n(k)$ with Lie bracket defined by $[A,B]=AB-BA$.
\begin{exercise}
Let us consider the restriction of $Ad$ action to the maximal torus $T=D_n(k)$. Thus we have $Ad \colon T \rightarrow GL(M_n(k))$. Since $T$ consists of commuting set of semisimple elements, its image is simultaneously diagonalisable. Obtain the following simultaneous eigenspace decomposition  with respect to $T$,
$$M_n(k) = d_n(k) \bigoplus \bigoplus_{i\neq j} <e_{ij}>.$$
To get this prove the following:
\begin{enumerate}
\item For the trivial character $\chi=0$ (in additive notation which maps everything to the constant $1$) we get $V_{\chi}=d_n(k)$. This is because, $\{X\in M_n(k) \mid Ad_t(X)=X \forall t\in T\} = \mathcal Z_{M_n(k)}(T) = d_n(k)$.
\item For $i\neq j$, $V_{\chi_i-\chi_j}= <e_{ij}>$ where $\chi_i-\chi_j \in X(T)$ given by 
$$(\chi_i-\chi_j)(\diag(\lambda_1,\ldots, \lambda_n))=\lambda_i\lambda_j^{-1}.$$ 
For this we need to verify 
$$Ad_t(e_{ij}) = te_{ij}t^{-1} = (\chi_i-\chi_j)(t)e_{ij}.$$
\end{enumerate}
\end{exercise}
\noindent Now, we denote the set of non-zero characters appearing in the above decomposition by $\Phi=\{\chi_i-\chi_j \mid i\neq j\}\subset X(T)$.
\begin{exercise}
The set $\Phi$ has rank $n-1$, i.e., it spans $n-1$ dimensional Abeilan subgroup in $\mathbb Z^n$. Further, this can be obtained as kernel of the map $X(T) \rightarrow \mathbb Z$ given by $\sum a_i\chi_i \mapsto \sum a_i$.
\end{exercise}

\begin{exercise}
Take $\chi\in \Phi$, say $\chi=\chi_i-\chi_j$. 
\begin{enumerate}
\item Then show that $S= ker(\chi) = \{\diag(\lambda_1,\ldots ,\lambda_n) \in T \mid \lambda_i=\lambda_j\}$ is a torus of rank $n-1$.
\item Compute $G_{\chi}=\mathcal Z_{GL_n}(S)$ which for $i=1, j=2$ looks like 
$$= \left\{ \begin{pmatrix} GL_2 & & & & \\ &* &&& \\ &&\ddots&&  \\&&&&* \end{pmatrix} \right\}$$
and show that it is isomorphic to $GL_2 \times \mathbb G_m^{n-2}$.
\item Note that $S\subset T \subset G_{\chi}$. In fact, $S \subset \mathcal Z(G_{\chi})$.
\end{enumerate}
\end{exercise}

\begin{exercise}
Observe that $T\subset G_{\chi}$ is a maximal torus. Now define $W_{\chi}= \frac{N_{G_{\chi}}(T)}{\mathcal Z_{G_{\chi}}(T)}$, the Weyl group of $G_{\chi}$ with respect to $T$. Being isomorphic to the Weyl group of $G_{\chi}/S$ with respect to $T/S$, it is generated by $W_{\chi}=<n_{\chi}>$ where $n_{\chi}=x_{ij}(t)x_{ji}(-t^{-1})x_{ij}(t)$.
\end{exercise}

\begin{exercise}
For each $\chi\in \Phi$, we have $n_{\chi}$ obtained as above. Clearly, $n_{\chi} \in W(GL_n, T)$. Compute $w_{n_{\chi}}$ (as in Exercise~\ref{weylgroup}) and show that for $\chi=\chi_i-\chi_j$, it maps $\chi_i\leftrightarrow \chi_j$ and fixes all others. Thus, in $\Aut(X)$ it is a permutation of basis elements.
\end{exercise}

\begin{exercise}
Let us fix a Borel $B=T_n(k)$ containing our fixed diagonal maximal torus $T$. Then for $\chi=\chi_i-\chi_j$,
\begin{enumerate}
\item $G_{\chi} \bigcap N_n(k) = X_{ij}$, the root subgroup.
\item $G_{\chi} \bigcap N_n(k)^{-} = X_{ji}$. 
\end{enumerate}
\end{exercise}

Recall that the co-root $\alpha^{\vee}$ for $\alpha$ is defined by $<\alpha, \alpha^{\vee}>=2$ and $w_{n_{\alpha}}(x)=x-<x, \alpha^{\vee}>\alpha$ where $w_{n_{\alpha}}\in \Aut(X)$ is already determined.
\begin{exercise}[Co-roots]
Thus, for  $\chi=\chi_i-\chi_j$ we get $\chi^{\vee}=\psi_i-\psi_j$.
\end{exercise}
Hint: Let $\chi^{\vee}=\sum_{r} m_r\psi_r$. Now, we know $w_{n_{\chi}}(\chi_r)=\chi_{r}$ for $r\neq i, j$ which gives, $\chi_r= w_{n_{\chi}}(\chi_r)=\chi_r-<\chi_r, \sum m_s\psi_s>(\chi_i-\chi_j) = \chi_r - m_r(\chi_i-\chi_j)$ gives $m_r=0$. Now $\chi_j= w_{n_{\chi}}(\chi_i)=\chi_i-<\chi_i, \sum m_s\psi_s>(\chi_i-\chi_j) = \chi_i - m_i(\chi_i-\chi_j)$, gives $m_i=1$. Similarly, we get $m_j=-1$.

The quadruple $(X(T), \Phi, Y(T), \Phi^{\vee})$ obtained here is the root datum of $GL_n$ and determines it completely.

\chapter{(Real compact) Orthogonal group and Symmetry}

We begin with the set $\mathbb R^n$. To denote the elements of $\mathbb R^n$ we use coordinates. This coordinate system provides a unique representation of each element with respect to the coordinate system $\{e_1,\ldots,e_n\}$. This coordinate system is called the standard basis. The set $\mathbb R^n$ has many structures. 

The {\bf Euclidean space} $(\mathbb R^n, d)$ is a metric space with the distance function 
$$d(x,y):=\sqrt{(x_1-y_1)^2+\cdots +(x_n-y_n)^2}$$
where $x=(x_1,\ldots,x_n)$ and $y=(y_1,\ldots,y_n)$.
This induces a topology on $\mathbb R^n$. One can study functions with various properties such as continuity, differentiability etc. Usually we take $n\geq 2$. An {\bf isometry} of $\mathbb R^n$ is a map $f\colon \mathbb R^n\rightarrow \mathbb R^n$ such that $d(f(x),f(y))=d(x,y)$ for all $x,y\in\mathbb R^n$. Here we want to study isometries. We prove that isometries are bijection and form a group. They are also called affine linear maps.

We recall that $\mathbb R^n$ is an $n$-dimensional vector space over the field $\mathbb R$. Thus on $\mathbb R^n$ we already have ``nice'' linear maps. The coordinate system provides a standard basis. These linear maps can be represented as matrices with respect to the fixed standard basis. First, we would like to determine which linear maps are isometry. For this purpose we introduce norm on the vector space. The {\bf norm} on the vector space $\mathbb R^n$ is given by $||x||:=\sqrt{x_1^2+\cdots+x_n^2}$. We also have corresponding symmetric bilinear
form given by $B(x,y):=\sum_{i=1}^n x_iy_i$. The norm and symmetric bilinear form are related and can be obtained from each other.

\section{$O_2(\mathbb R)$ and $\mathbb C$}
The orthogonal group $O_2(\mathbb R)=\{A\in M_2(\mathbb R) \mid \tr AA=I\}$. 
\begin{exercise}
For $a\in \mathbb R^2$ define the translation map $\tau_a\colon \mathbb R^2\rightarrow \mathbb R^2$ by $\tau_a(x)=x+a$. Show that $\tau_a$ is an isometry but not a linear map. Further show
that $\tau_a\tau_b=\tau_{a+b}$ and $\tau_{a}^{-1}=\tau_{-a}$.
\end{exercise}

\begin{exercise}
Consider the rotation map $\rho_{\theta}\colon \mathbb R^2\rightarrow \mathbb R^2$ (around origin) given
by $$\rho_{\theta}(x_1,x_2)=(\cos(\theta)x_1-\sin(\theta)x_2, \sin(\theta)x_1+\cos(\theta)x_2).$$
\begin{enumerate}
\item Show that $\rho_{\theta}$ is an isometry. Also $\rho_{\theta}=\rho_{\theta+2\pi}$.
\item Show that $\rho_{\theta}$ is a linear, one-one and onto map. Write down
its matrix.
\item $\rho_{\theta_1}\rho_{\theta_2}=\rho_{\theta_3}$ where $\theta_3=\theta_1+\theta_2$ modulo $2\pi$. 
\end{enumerate}
\end{exercise}

\begin{exercise}
Given a non-zero vector $v\in\mathbb R^2$, consider the reflection map $r_v\colon \mathbb R^2\rightarrow \mathbb R^2$ defined by
$r_v(x)=x-2\frac{B(x,v)}{||v||^2}v$. This is a reflection in the line passing through origin perpendicular to the vector $v$.
\begin{enumerate}
\item Show that $r_v$ is an isometry and $r_v=r_{\alpha v}$ for any $\alpha\neq 0$.
\item Show that $r_v$ is a linear, one-one and onto map.
\item Show that $r_v$ is characterised (among linear maps) by $r_v(v)=-v$ and $r_v(x)=0$ for $x\perp v$ (i.e., $B(x,v)=0$).
\item Show that $r_v^2=1$. What do you get if you compose two reflections.
\end{enumerate}
\end{exercise}
\begin{exercise}
\begin{enumerate}
\item Use the definition and show that 
$$SO_2(\mathbb R)=\left \{\begin{pmatrix} \cos(\theta) & -\sin(\theta) \\ \sin(\theta) & \cos(\theta)\end{pmatrix} \mid 0\leq \theta < 2\pi \right\}.$$
These elements are called rotations. 
\item Use the map $\det \colon O_2(\mathbb R) \rightarrow \{\pm 1\}$ to show that $|\frac{O_2(\mathbb R)}{SO_2(\mathbb R)}|=2$.
\item By fixing an element $s\in O_2(\mathbb R)\backslash SO_2(\mathbb R)$ prove that $O_2(\mathbb R) = SO_2(\mathbb R) \bigcup s.SO_2(\mathbb R)$. Prove that we can take $s=\begin{pmatrix} 1 &  \\ & -1 \end{pmatrix}$.
\item Hence, 
$$O_2(\mathbb R)=\left \{\begin{pmatrix} \cos(\theta) & -\sin(\theta) \\ \sin(\theta) & \cos(\theta)\end{pmatrix}, \begin{pmatrix} \cos(\phi) & -\sin(\phi) \\ -\sin(\phi) & -\cos(\phi)\end{pmatrix} \mid 0\leq \theta, \phi < 2\pi \right\}.$$
\item Prove that every element of $O_2(\mathbb R)\backslash SO_2(\mathbb R)$ has a fixed line. These are reflections.
\item Show that $SO_2(\mathbb R)$ is an Abelian group but $O_2(\mathbb R)$ is not.
\item Compute $s\rho_{\theta}s^{-1}$ for $\rho_{\theta}\in SO_2(\mathbb R)$?
\item Prove that every element of $SO_2(\mathbb R)$ is a product of two  reflections. This is simply writing $\rho_{\theta}=s.s\rho_{\theta}$. 
\end{enumerate}
\end{exercise}

We explore its relation with complex numbers $\mathbb C$. It is a coincidence that the space $\mathbb R^2$ has a multiplication which makes it a field. The important fact is that the norm $N(x)=||x||^2$ is multiplicative with respect to the multiplication defined by the complex.
\begin{exercise}
Show that with complex multiplication $N(xy)=N(x)N(y)$. This will not be satisfied if we took co-ordinate wise multiplication on $\mathbb R^2$. 
\end{exercise}
\noindent Hurwitz proved that the only space which has multiplicative property for norm is $n=1,2, 4$ and $8$. Later in the next chapter we introduce Hamilton's quaternion on $\mathbb R^4$ and provide the next example. The multiplication on $\mathbb R^8$ which makes norm multiplicative makes it Octonion.
\begin{exercise}
Consider $\mathbb C$ as one dimensional vector over itself and $2$ dimensional vector space over $\mathbb R$ with basis $\{1,i\}$.
\begin{enumerate}
\item Let $0\neq \alpha \in \mathbb C$. Write down the real matrix of left multiplication by $\alpha$.
\item Show that $SO_2(\mathbb R)$ can be identified with the group $SO_2(\mathbb R)$.
\item Consider the conjugation map. What is its real matrix. 
\end{enumerate}
\end{exercise}

\begin{exercise}
\begin{enumerate}
\item Prove that any finite subgroup of $SO_2(\mathbb R)\cong S^1$ is cyclic.
\item Prove that any finite subgroup of $O_2(\mathbb R)$ which is not contained in $SO_2(\mathbb R)$ is a dihedral group.
\item Fix a prime $p$. Take the set $\{z\in S^1 \mid z^{p^r}=1 \ for \ some \ r\}$. Show that its a group.
\end{enumerate}
\end{exercise}

\begin{exercise}
Prove that $O_2(\mathbb R)$ is not connected whereas $SO_2(\mathbb R)$ is connected.  
\end{exercise}
\begin{exercise}
Prove that $O_2(\mathbb R)$ and $SO_2(\mathbb R)$ both are compact groups. 
\end{exercise}

\section{$O_n(\mathbb R)$}

A linear transformation $S\colon \mathbb R^n \rightarrow \mathbb R^n$ is called {\bf orthogonal}
if $||S(x)||=||x||$ for all $x\in \mathbb R^n$.
\begin{exercise}
Show that the following are equivalent for a linear transformation $S\colon 
\mathbb R^n \rightarrow \mathbb R^n$:
\begin{enumerate}
\item $S$ is orthogonal.
\item $S$ is an isometry.
\item $B(S(x),S(y))=B(x,y)$ for all $x,y\in \mathbb R^n$.
\item $S$ maps an orthonormal basis to an orthonormal basis.
\item The matrix $A$ of $S$ with respect to the standard basis satisfies $\tr AA=I$.
\end{enumerate}
\end{exercise}

We define the {\bf orthogonal group} $O(n)$ as the set of all orthogonal linear transformations.
In the matrix form (with respect to the fixed standard basis) 
$$O(n):=\{A\in M_n(\mathbb R)\mid \tr AA=I\}.$$
\begin{exercise}
Let $A, B\in O(n)$. Show that:
\begin{enumerate}
\item $\det(A)=\pm 1$.
\item $AB\in O(n)$ and $A^{-1}\in O(n)$.
\end{enumerate}
\end{exercise}

We define the set $SO(n):=\{A\in O(n)\mid \det(A)=1\}$.

\begin{exercise}
Use the reflection defined in the next exercise to show that $SO(n)\subset O(n)$ is proper.
\end{exercise}
\section{Isometries}

\begin{exercise}[Examples of isometry]
\begin{enumerate}
\item For $a\in \mathbb R^n$ we define translation $\tau_a\colon \mathbb R^n\rightarrow
\mathbb R^n$ by $\tau_a(x)=x+a$. Show that it's an isometry but not a linear map.
\item For a non-zero vector $v\in \mathbb R^n$ define reflection 
$r_v\colon \mathbb R^n \rightarrow \mathbb R^n$ by 
$$r_v(x)=x-2\frac{B(x,v)}{||v||^2}v.$$
Show that $r_v(v)=-v$ and $r_v$ fixes a $(n-1)$ dimensional hyperplane (which one!) pointwise. Further it
is an isometry and a linear map.
\item For $\theta\in [0,2\pi)$ and fixed $1\leq l\leq n-1$ we define 
$\rho_{l,\theta}\colon \mathbb R^n\rightarrow \mathbb R^n$
by $\rho_{l,\theta}(e_l)= \cos(\theta)e_l-\sin(\theta)e_{l+1}$, 
$\rho_{l,\theta}(e_{l+1})= \sin(\theta)e_l+\cos(\theta)e_{l+1}$ and
$\rho_{l,\theta}(e_i)=e_i$ for $i\neq l,l+1$. Show that $\rho_{l,\theta}$
is an isometry and is a linear map.
\end{enumerate}
\end{exercise}
\noindent However the distance function and the norm $||.||$, both are related on $\mathbb R^n$.
\begin{exercise}
Show the following:
\begin{enumerate}
\item $d(x,y)=||x-y||$. Further $d$ satisfies the $3$ properties of being a distance function.
\item $B(x,x)=||x||^2$.
\item $B(x,y)=\frac{||x+y||^2-||x||^2-||y||^2}{2}$. Further $B$ is a symmetric bilinear map.
\end{enumerate}
\end{exercise}
\begin{exercise}
Show that if $f\colon \mathbb R^n\rightarrow \mathbb R^n$ is a linear map which is also an isometry, then $f\in O_n(\mathbb R)$. 
\end{exercise}

\noindent Now we prove that any isometry is composition of a translation and an orthogonal map. We remark that it is not clear from the definition of an isometry that it is a bijection.
\begin{exercise}
Let $T\colon \mathbb R^n\rightarrow \mathbb R^n$ be an isometry. Suppose $T$ fixes origin
and also every element of the standard basis, i.e., $T(0)=0$ and $T(e_i)=e_i\forall i$.
Then show that $T$ is identity.
\end{exercise}
\begin{proof}
Let $x=(x_1,\ldots, x_n)\in \mathbb R^n$ and $T(x)=y=(y_1,\ldots, y_n)$. We want to prove $y=x$. We are given that $T$ is an isometry. Hence we have following equations, $d(0,x)=d(T(0),T(x))=d(0,y)$ implies $||x||=||y||$. Similarly,  $d(x,e_i)=d(T(x), T(e_i))=d(y,e_i)$ implies $||x-e_i||=||y-e_i||$. This gives the equation 
$$x_1^2+\cdots+x_{i-1}^2+(x_i-1)^2+x_{i+1}^2+ \cdots +x_n^2 = y_1^2+ \cdots + y_{i-1}^2+ (y_i-1)^2+ y_{i+1}^2 + \cdots +y_n^2$$
which after expanding and combining with $||x||=||y||$ gives $x_i=y_i$. This proves the required result. 
\end{proof}
\begin{exercise}
Let $T\colon \mathbb R^n\rightarrow \mathbb R^n$ be an isometry. Suppose $T$ fixes origin.
Then show that $T$ is an orthogonal linear transformation.
\end{exercise}
\begin{proof}
Consider a matrix $S$ of which columns are $y_i=T(e_i)$. Then $S$ defines a linear map $S\colon \mathbb R^n \rightarrow \mathbb R^n$ with the property $S(e_i)=y_i= T(e_i)$ for all $i$. We claim that $S$ is an orthogonal transformation. For this verify that $\{y_1,\ldots, y_n\}$ is an orthonormal basis, thus $S$ maps an orthonormal basis to an orthonormal basis.

Now let us consider the map $S^{-1}T\colon \mathbb R^n \rightarrow \mathbb R^n$. This is an isometry. Further this map fixes origin and all of the $e_i$. Hence from previous exercise it must be the identity map. This implies that $T=S$.
\end{proof}
\begin{exercise}
Let $f\colon \mathbb R^n\rightarrow \mathbb R^n$ be an isometry. 
Show that $f=\tau_a T$ where $a=f(0)$ and $T\in O_n(\mathbb R)$. 
\end{exercise}
\begin{proof}
Consider the map $\tau_{-a}f$ which is an isometry. Clearly $(\tau_{-a}f)(0)=\tau_{-a}(f(0))=f(0)-a=0$. Thus from previous exercise  $\tau_{-a}f$ is an isometry, say $T\in O_n(\mathbb R)$. Hence, $f=\tau_aT$.
\end{proof}
\noindent Thus we have the following,
\begin{exercise}
Let $f\colon \mathbb R^n\rightarrow \mathbb R^n$ be an isometry. 
\begin{enumerate}
\item Show that there exists $a\in \mathbb R^n$ and $A\in O_n(\mathbb R)$ such that $f(x)=Ax+a$. 
\item Let $f, f'$ be isometry given by $f(x)=Ax+a$ and $f'(x)=A'x+a'$. Show that 
$$(ff')(x) = AA'x+(Aa'+a).$$ 
\item Show that an isometry is one-one and onto map.
\item Show that the set of isometries $Iso(\mathbb R^n)$ is closed under composition. 
\item Show that the inverse of an isometry is again an isometry. In particular, $f^{-1}(x) = A^{-1}x - A^{-1}a$.
\item Show that $Iso(\mathbb R^n)$ is a group.
\item Show that $(f\tau_b f^{-1})(x) = \tau_{Ab}(x)$ where $f(x)=Ax+a$. Use this to show that the set of translations is a normal subgroup of $Iso(\mathbb R^n)$.
\item Prove that $Iso(\mathbb R^n)\cong \mathbb R^n \rtimes O_n(\mathbb R)$.
\end{enumerate}
\end{exercise}


\section{Symmetries and Platonic Solids}

\begin{exercise}
This exercise leads to the {\bf Classification of Platonic solids}.
\begin{definition}
A {\bf convex polyhedron} in 3d is said to be {\bf regular} if:
\begin{itemize}
\item Its faces are convex regular $n$-gons (for some $n \in \mathbb N, n \geq 3$) and any two faces are congruent.
\item Only possible intersections of faces are at edges.
\item At each vertex the same number (say $m$) of faces meet.
\end{itemize}
\end{definition}
It is clear that $n$ and $m$ are at least $3$. Regular convex polyhedra are also called Platonic solids. Cube is an example of a Platonic solid with $n = 4$ and $m = 3$.
For a Platonic solid the pair $(n,m)$ is called its Schl{\"a}fli symbol.  

Let $P$ be a Platonic solid with Schl{\"a}fli symbol $(n,m)$. Let $v$ be the number of vertices of $P$, $e$ be the number of edges of $P$ and $f$ be the number of faces of $P$.
Solve the following:
\begin{enumerate}
\item[(5a)] Show that $nf=2e=mv$.
\item[(5b)] Show that $\frac{1}{m}+\frac{1}{n}>\frac{1}{2}$.
\item[(5c)] Find all solutions $(n,m)$.
\end{enumerate}
\end{exercise}

\begin{exercise}
Compute the symmetry group of $n$-gons and Platonic solids. 
\end{exercise}

\section{Finite subgroups of $O_2(\mathbb R)$ and $O_3(\mathbb R)$}
These are some well known theorems worth attempting to give a prove.

\begin{theorem}
Any finite subgroup of $O_2(\mathbb R)$ is isomorphic to either the cyclic group $\mathbb Z/m\mathbb Z$ or the dihedral group $D_m$. 
\end{theorem}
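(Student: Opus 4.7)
The plan is to split $G$ into its rotational part and its reflective part, using the determinant map, and then exploit the conjugation relation between rotations and reflections already computed in the preceding exercises.

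First I would let $G$ be a finite subgroup of $O_2(\mathbb{R})$ and restrict the determinant character $\det\colon O_2(\mathbb{R}) \to \{\pm 1\}$ to $G$. The kernel of this restriction is $H := G \cap SO_2(\mathbb{R})$, and since $SO_2(\mathbb{R}) \cong S^1$ and any finite subgroup of $S^1$ is cyclic (established in the earlier exercise on finite subgroups of $SO_2(\mathbb{R})$), we get $H \cong \mathbb{Z}/m\mathbb{Z}$ for some $m \geq 1$. Write $H = \langle \rho \rangle$ with $\rho = \rho_{2\pi/m}$.

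Now two cases. If $\det(G) = \{1\}$, then $G = H$ and we are done. Otherwise $\det(G) = \{\pm 1\}$, so $G$ contains at least one reflection $s$, and $H$ has index $2$ in $G$, giving $G = H \sqcup sH$ with $|G| = 2m$. To identify $G$ with $D_m$ I would invoke the computation $s\rho_\theta s^{-1} = \rho_{-\theta}$ (done in the exercise on $O_2(\mathbb{R})$, where $s$ was taken to be $\operatorname{diag}(1,-1)$ and verified to invert every rotation; the same computation applies to any reflection, since any two reflections differ by a rotation and conjugation by a rotation is trivial on the abelian subgroup $H$). Combined with $s^2 = 1$ and $\rho^m = 1$, this gives exactly the dihedral presentation
\[
G = \langle \rho, s \mid \rho^m = s^2 = 1,\ s\rho s^{-1} = \rho^{-1}\rangle \cong D_m.
\]

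To be fully rigorous, I would verify the presentation by showing the obvious map from the abstract $D_m$ onto $G$ (sending the standard generators to $\rho$ and $s$) is well-defined and surjective, and then use $|D_m| = 2m = |G|$ to conclude that it is an isomorphism. The only point that requires a little care is the intermediate claim that \emph{every} element of $G \setminus H$ is a reflection and that conjugation by any such element inverts $H$; this follows immediately because any element of $O_2(\mathbb{R}) \setminus SO_2(\mathbb{R})$ has determinant $-1$ and, by the classification of $O_2(\mathbb{R})$ recalled in the earlier exercise, is of the reflective form whose square is the identity. The main (and really only) obstacle is to correctly invoke the earlier exercises for the two facts that drive the proof: finite subgroups of $S^1$ are cyclic, and reflections conjugate rotations to their inverses; once these are in hand, the argument is essentially a counting and presentation check.
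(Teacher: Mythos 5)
Your proof is correct, and it follows exactly the route the paper intends: the paper itself gives no proof of this theorem (it is listed as ``worth attempting''), but the two ingredients you use --- that finite subgroups of $SO_2(\mathbb R)\cong S^1$ are cyclic, and that elements of $O_2(\mathbb R)\setminus SO_2(\mathbb R)$ are involutive reflections conjugating each rotation to its inverse --- are precisely the preceding exercises, one of which even states the dihedral case verbatim. The index-two argument via $\det$, the observation that conjugation by any reflection inverts $H$ because reflections differ by central-in-$H$ rotations, and the final order count against the presentation of $D_m$ are all sound, so nothing is missing.
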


\begin{theorem}
Any finite subgroup of  $O_3(\mathbb R)$ is isomorphic to one of the following: the cyclic group $\mathbb Z/m\mathbb Z$, the dihedral group $D_m$, $A_4, S_4$ or $A_5$. 
\end{theorem}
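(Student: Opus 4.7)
The plan is to reduce to the analogous classification inside $SO_3(\mathbb R)$ and then run the classical ``poles'' argument. First I would observe that if $G\subset O_3(\mathbb R)$ is a finite subgroup, then $H=G\cap SO_3(\mathbb R)$ has index $1$ or $2$ in $G$ (being the kernel of $\det\colon G\to\{\pm1\}$). If $-I\in G$ then $G\cong H\times\langle -I\rangle$, and if $G\not\subset SO_3(\mathbb R)$ but $-I\notin G$, then the map sending $g\in H$ to $g$ and $g\in G\setminus H$ to $-g$ is a homomorphism $G\to SO_3(\mathbb R)$ that is injective (since $-I\notin G$) and hence realizes $G$ as an abstract subgroup of $SO_3(\mathbb R)$. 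Thus the isomorphism type of any finite subgroup of $O_3(\mathbb R)$ is that of some finite subgroup of $SO_3(\mathbb R)$, and it suffices to classify those.

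Next I would use that every non-identity $g\in SO_3(\mathbb R)$ is a rotation about an axis, so it fixes exactly two antipodal points on the unit sphere, called its \emph{poles}. For a finite $G\subset SO_3(\mathbb R)$ of order $n$, let $X$ be the set of poles of non-identity elements; conjugation shows $G$ acts on $X$. Double-counting the set of pairs $(g,v)$ with $g\neq e$ and $gv=v$ gives on one side $2(n-1)$, and on the other side $\sum_{v\in X}(|G_v|-1)$. Grouping $X$ into $G$-orbits $\mathcal O_1,\ldots,\mathcal O_t$ with common stabilizer orders $r_1,\ldots,r_t$ (each $\ge 2$) and $|\mathcal O_i|=n/r_i$, this rearranges to
\[
\sum_{i=1}^{t}\Bigl(1-\tfrac{1}{r_i}\Bigr)=2-\tfrac{2}{n}.
\]
Since the left side is $\ge t/2$ and $<2$, we must have $t\in\{2,3\}$.

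Then I would solve this Diophantine constraint case by case. For $t=2$ one forces $r_1=r_2=n$, so $G$ is the cyclic group $\mathbb Z/n\mathbb Z$ fixing a single axis. For $t=3$, assuming $r_1\le r_2\le r_3$ the possibilities reduce to $(2,2,k)$ with $n=2k$, $(2,3,3)$ with $n=12$, $(2,3,4)$ with $n=24$, and $(2,3,5)$ with $n=60$. The $(2,2,k)$ case produces an order-$k$ rotation axis together with $k$ perpendicular $2$-fold axes, yielding $D_k$. In each of the remaining cases, the orbit of poles of the smallest size gives a faithful action of $G$ on $4$, $6$, or $12$ points (vertices, face-centres, or edge-midpoints of the corresponding Platonic solid), and comparing the orders $12$, $24$, $60$ with the symmetric groups on these sets identifies $G$ as $A_4$, $S_4$, $A_5$ respectively.

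The main obstacle is the last step: extracting the specific isomorphism type from the abstract data $(n,r_1,r_2,r_3)$. The counting argument constrains only the order and the stabilizer structure, but there exist groups of order $12$, $24$, $60$ distinct from $A_4$, $S_4$, $A_5$, so one must genuinely use the geometry of the pole orbits to produce an injective homomorphism into the correct symmetric group. For $(2,3,5)$ this additionally rests on the simplicity of $A_5$ to conclude that the index-$2$ image in $S_5$ must be $A_5$. Everything else (the reduction to $SO_3$, the double count, and the Diophantine analysis) is essentially bookkeeping.
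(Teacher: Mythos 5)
The paper does not actually supply a proof of this theorem --- it is listed among ``well known theorems worth attempting to give a prove'' --- so there is no argument of record to compare against; your pole-counting classification of the finite subgroups of $SO_3(\mathbb R)$ (the double count $\sum_i(1-\tfrac{1}{r_i})=2-\tfrac{2}{n}$ and the case analysis $t\in\{2,3\}$) is the standard route and is correct. The genuine gap is in your opening reduction. When $-I\in G$ you correctly obtain $G\cong H\times\langle -I\rangle$ with $H=G\cap SO_3(\mathbb R)$, but the conclusion you draw --- that every finite subgroup of $O_3(\mathbb R)$ is therefore isomorphic to a finite subgroup of $SO_3(\mathbb R)$ --- is false. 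Since $-I$ is central and has determinant $-1$ in odd dimension, $H\times\mathbb Z/2\mathbb Z$ genuinely occurs inside $O_3(\mathbb R)$ for every finite $H\subset SO_3(\mathbb R)$, and these products are usually \emph{not} on the list: $\mathbb Z/4\mathbb Z\times\mathbb Z/2\mathbb Z$ (a rotation of order $4$ together with $-I$) is abelian, non-cyclic of order $8$, hence isomorphic to none of $\mathbb Z/m\mathbb Z$, $D_m$, $A_4$, $S_4$, $A_5$; likewise $A_4\times\mathbb Z/2\mathbb Z$ has order $24$ but is neither $S_4$ (it has nontrivial centre) nor $D_{12}$ (it has no element of order $12$). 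So the branch $-I\in G$ does not reduce to the $SO_3$ classification; in fact it shows the theorem as printed is stated too strongly, and the correct conclusion for $O_3(\mathbb R)$ must also admit the direct products $H\times\mathbb Z/2\mathbb Z$. Your mixed case ($G\not\subset SO_3(\mathbb R)$, $-I\notin G$, twisting by the determinant) is handled correctly and is exactly how those groups are dealt with in the standard treatment.

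A secondary point, which you flag yourself as the main obstacle: in the cases $(2,3,4)$ and $(2,3,5)$ the smallest pole orbits have $6$ and $12$ elements, and an injection of a group of order $24$ into $S_6$, or of order $60$ into $S_{12}$, does not by itself determine the isomorphism type. One must manufacture a $4$-element set (the four antipodal pairs in the size-$8$ orbit, i.e.\ the diagonals of the cube) to get an injection $G\hookrightarrow S_4$ and hence $G\cong S_4$, and a $5$-element set (the five inscribed cubes, or the five orthogonal triples among the fifteen $2$-fold axes) to get $G\hookrightarrow S_5$ with image of index $2$, which is $A_5$ because $S_5$ has a unique subgroup of index $2$. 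Only the $(2,3,3)$ case is settled by the orbit of poles itself. These constructions are standard but they carry the real content of the last step and would need to be written out.
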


A more general theory at this point is trying to understand finite Coxeter groups. The finite Coxeter groups are defined as a subgroup generated by a bunch of finitely many reflections in an Orthogonal group. This is very welll understood and leads to classification via root systems, similar to simple Lie algebras. Interested reader can refer to the book by Humphreys on this subject.

\chapter{Hamilton's Quaternion}
The set $\mathbb H= \{ a= a_0+a_1 i + a_2 j +a_3 k \mid a_0, a_1, a_2, a_3 \in \mathbb R \}$, with addition and multiplication defined below, is called the set of real quaternions or Hamilton's quaternion. These operations make it an associative algebra (non-commutative ring as well as a vector space over $\mathbb R$). There is a much more general definition of quaternions over any field (not just over $\mathbb R$) which we describe in Chapter~\ref{quaternion}. The algebra operations on $\mathbb H$ are defined as follows: for $a=a_0+a_1 i + a_2 j +a_3 k$ and $b = b_0+ b_1 i + b_2 j +b_3 k$, we have,
\begin{eqnarray*}
a + b &=& (a_0+b_0) + (a_1+b_1) i + (a_2+b_2) j + (a_3+b_3) k \\
a.b &=& (a_0b_0 - a_1b_1 -a_2b_2 -a_3b_3) + (a_0b_1 + a_1b_0 + a_2b_3 -a_3b_2) i   \\ && + (a_0b_2 -a_1b_3 + a_2b_0 + a_3b_1) j + (a_0b_3 + a_1b_2 -a_2b_1 + a_3b_0)k.
\end{eqnarray*}
The multiplication can be simply obtained by using the following relations: $i^2=-1=j^2=k^2$ and $ij=-ji=k$. 

\begin{exercise}
Show that $\mathbb H$ is a division algebra. That is, it has all properties of field except the multiplication is not commutative. 
\end{exercise}

It has several properties similar to complex numbers $\mathbb C$. We define conjugation by 
$$\bar a = a_0-a_1 i - a_2 j- a_3 k.$$ 
\begin{exercise}
Compute and show that $a \bar a=a_0^2+a_1^2 + a_2^2 + a_3 ^2=\bar a a$ is a real number. 
\end{exercise}
This leads to the definition of norm, $N(a)=a\bar a$, and trace $tr(a)=a+\bar a = 2a_0$.

\begin{exercise}
\begin{enumerate}
\item Show that the map $N\colon \mathbb H^* \rightarrow \mathbb R^*$ defined by $a\mapsto N(a)$ is a multiplicative group homomorphism. 
\item Show that the map $Tr \colon \mathbb H \rightarrow \mathbb R$ defined by $a \mapsto Tr(a)$ is an $\mathbb R$-linear map.
\end{enumerate}
\end{exercise}

\begin{exercise}
\begin{enumerate}
\item How many solutions does the equation $X^2 + 1$ have over $\mathbb H$? Compare your answer with solving equations over a field.
\item For an $a\in \mathbb H$ show that it always satisfies the quadratic equation $X^2 - (tr(a)) X + N(a) =0$.
\end{enumerate}
\end{exercise}

\section{Realization of quaternions}
There are several ways to think about quaternions. In various situation they can be helpful in computation.

\subsection{As double complex numbers}
For $a\in \mathbb H$ we write 
$$a=a_0+a_1 i + a_2 j +a_3 k = (a_0+a_1 i) + (a_2+a_3 i)j = z_1 + z_2 j$$
where $z_1, z_2$ can be thought of as elements in $\mathbb C$. Thus we may think of $\mathbb H = \mathbb C + \mathbb C j$ and note that the new multiplication formula would be, for $a=z_1 + z_2 j$ and $b = w_1 + w_2 j$,
$$a.b = (z_1 + z_2 j).(w_1 + w_2 j) = (z_1w_1-\bar w_2 z_2) + (w_2z_1 + z_2\bar w_1)j.$$
We remark that this formula can be further generalised to define an octonion algebra. 

\begin{exercise}
Verified that the new formula is same as usual multiplication on $\mathbb H$. In this notation, $N(a)= N(z_1 + z_2 j) = N(z_1)+N(z_2)$ and $\bar a = \overline{z_1+z_2 j} = \bar z_1 - z_2 j$ where the norm and conjugation on complex numbers are defined in usual way. 
\end{exercise}

\subsection{As $2\times 2$ complex matrices}

For $a\in \mathbb H$ we write $a= z_1 + z_2 j$ where $z_1, z_2 \in \mathbb C$ as above. Now we identify  $a$ with the matrix $a=\begin{pmatrix} z_1 & -z_2 \\ \bar z_2 & \bar z_1 \end{pmatrix}$ in $M_2(\mathbb C)$. Thus we can think of $\mathbb H$ to be the set of $2\times 2$ complex matrices of this kind. The multiplication in this case is simply given by the matrix multiplication.
\begin{exercise}
Verify that the quaternion multiplication is same as matrix multiplication represented in this form, i.e, 
$$a.b = \begin{pmatrix} z_1 & -z_2 \\ \bar z_2 & \bar z_1 \end{pmatrix} \begin{pmatrix} w_1 & -w_2 \\ \bar w_2 & \bar w_1 \end{pmatrix} = \begin{pmatrix} z_1w_1 -z_2\bar w_2 & -z_1w_2-z_2\bar w_1 \\ \bar z_2w_1+ \bar z_1\bar w_2 & -\bar z_2 w_2 + \bar z_1\bar w_1 \end{pmatrix}.$$
\end{exercise}
\begin{exercise}
The norm $N(a)=\det\begin{pmatrix} z_1 & -z_2 \\ \bar z_2 & \bar z_1 \end{pmatrix} = z_1\bar z_1 + z_2\bar z_2$ and conjugation is $\bar a = \begin{pmatrix} \bar z_1 & z_2 \\ -\bar z_2 & z_1 \end{pmatrix} = \tr(\bar a)$. Also, trace is simply trace of the matrix.
\end{exercise}
Remember the following association in the matrix notation:
$$1=\begin{pmatrix} 1 & \\ & 1 \end{pmatrix}, i=\begin{pmatrix} i & \\ & -i \end{pmatrix}, j=\begin{pmatrix}  & -1 \\ 1 &  \end{pmatrix}, j = \begin{pmatrix}  & -i \\ -i & 0 \end{pmatrix}.$$

\begin{exercise}[Four Square Identity]
\begin{enumerate}
\item Show that $\overline{ab}=\bar b \bar a$, and hence, $N(ab) = ab. \overline{ab} = ab \bar b \bar a = a N(b) \bar a = N(a)N(b)$.
\item Since the norm map is multiplicative, ie., $N(ab)=N(a)N(b)$, write this in expanded form to get the Four Square Identity.
\begin{eqnarray*}
&&(a_0^2+a_1^2 + a_2^2 +a_3^2). (b_0^2+b_1^2 + b_2^2 +b_3^2)  \\
&=&   (a_0b_0 - a_1b_1 -a_2b_2 -a_3b_3)^2 + (a_0b_1 + a_1b_0 + a_2b_3 -a_3b_2)^2   \\ && + (a_0b_2 -a_1b_3 + a_2b_0 + a_3b_1)^2  + (a_0b_3 + a_1b_2 -a_2b_1 + a_3b_0)^2.
\end{eqnarray*}
\item Show that, over $\mathbb Z$, the product of a sum of $4$-squares is again a sum of $4$-squares.
\end{enumerate}
\end{exercise}

\section{Application to Number Theory}

One of the famous problem in number theory is Waring problem. It asks the following question. Given a natural number $r$, does there exists a natural number $f(r)$, such that the polynomial function $x_1^r+x_2^r +\cdots + x_{f(r)}^r$ takes value all positive integers if we substitute integers. If yes, find the smallest value $f(r)$. For $r=2$, the answer is $f(2)=4$ which is the four square theorem. There are several proofs of this result and here we present one. 

\begin{theorem}[Four Square theorem]
Every natural number is a sum of four integer squares. 
\end{theorem}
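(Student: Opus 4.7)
The plan is to leverage the multiplicativity of the quaternion norm (the four-square identity from the previous exercise) to reduce the statement to the case of primes, and then handle the prime case by an infinite descent carried out using Lipschitz integer quaternions $\Lambda = \{a_0 + a_1 i + a_2 j + a_3 k : a_i \in \mathbb{Z}\}$. Since $0 = 0^2 + 0^2 + 0^2 + 0^2$ and $1 = 1^2 + 0^2 + 0^2 + 0^2$ are sums of four squares, and the Four Square Identity $N(ab) = N(a)N(b)$ shows that the set of nonnegative integers which are sums of four squares is closed under multiplication, it suffices to show every prime $p$ is a sum of four squares. The case $p = 2 = 1^2 + 1^2 + 0^2 + 0^2$ is immediate, so I focus on odd $p$.

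First I would produce a positive multiple $mp$ (with $m < p$) which is a sum of four squares. The sets $S_1 = \{1 + x^2 \bmod p : 0 \le x \le (p-1)/2\}$ and $S_2 = \{-y^2 \bmod p : 0 \le y \le (p-1)/2\}$ each have $(p+1)/2$ elements (distinct because $x \mapsto x^2$ is injective on $\{0, 1, \ldots, (p-1)/2\}$ modulo $p$), so by pigeonhole $S_1 \cap S_2 \neq \emptyset$, giving $x, y$ with $x^2 + y^2 + 1 \equiv 0 \pmod{p}$. Choosing representatives with $|x|, |y| \le (p-1)/2$, the quantity $x^2 + y^2 + 1^2 + 0^2$ is a positive multiple $mp$ with $m \le ((p-1)/2)^2 \cdot 2/p + 1/p < p$.

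Now let $m$ be the least positive integer such that $mp$ is a sum of four squares, say $mp = x_1^2 + x_2^2 + x_3^2 + x_4^2$; I claim $m = 1$. If $m$ is even then an even number of the $x_i$ are odd, so after reordering $x_1 \equiv x_2$ and $x_3 \equiv x_4 \pmod 2$, and the identity
\[
\left(\tfrac{x_1+x_2}{2}\right)^2 + \left(\tfrac{x_1-x_2}{2}\right)^2 + \left(\tfrac{x_3+x_4}{2}\right)^2 + \left(\tfrac{x_3-x_4}{2}\right)^2 = \tfrac{m}{2}\, p
\]
contradicts minimality. So $m$ must be odd; suppose $m > 1$. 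Pick $y_i \in \mathbb{Z}$ with $y_i \equiv x_i \pmod m$ and $|y_i| < m/2$. Then $y_1^2 + y_2^2 + y_3^2 + y_4^2 = mn$ for some $n$ with $0 \le n < m$, and $n > 0$: otherwise every $y_i = 0$, hence $m \mid x_i$ for all $i$, hence $m^2 \mid mp$, forcing $m \mid p$, which is impossible because $1 < m < p$.

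Set $\alpha = x_1 + x_2 i + x_3 j + x_4 k$ and $\beta = y_1 + y_2 i + y_3 j + y_4 k$ in $\Lambda$. By the multiplication formula for $\mathbb{H}$ every coordinate of $\alpha \bar\beta$ is a $\mathbb{Z}$-linear combination of the $x_r y_s$, and since $y_s \equiv x_s \pmod m$ each such combination is $\equiv \sum_r \pm x_r x_{\sigma(r)} \pmod m$, which the formula makes into $x_1^2 + x_2^2 + x_3^2 + x_4^2 \equiv 0$ in the real part and $0$ in each of the $i, j, k$ parts. Hence $\alpha \bar\beta = m\gamma$ for some $\gamma \in \Lambda$, and taking norms gives $m^2 N(\gamma) = N(\alpha)N(\beta) = (mp)(mn) = m^2 np$, so $N(\gamma) = np$ exhibits $np$ as a sum of four squares with $n < m$, contradicting minimality. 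Therefore $m = 1$, completing the proof.

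The main obstacle is the descent step with $m$ odd: the even case is a pure algebraic averaging identity, but to shrink $m$ in the odd case one really needs the quaternion structure to produce $\gamma$ with all coordinates in $\mathbb{Z}$; verifying that every coordinate of $\alpha\bar\beta$ is divisible by $m$ is the central computation, and I would present it by explicitly writing out $\alpha\bar\beta$ using the formula from the start of this chapter and reducing each of its four coordinates modulo $m$.
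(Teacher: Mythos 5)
Your proposal is correct and follows essentially the same route as the paper: reduction to primes via the multiplicativity of the quaternion norm, the pigeonhole lemma producing $p\mid 1+x^2+y^2$, and descent using residues of absolute value less than $m/2$ together with the divisibility by $m$ of all four coordinates of the product. Your packaging of the key computation as $\alpha\bar\beta\in m\Lambda$ and your explicit parity-pairing in the even case are just cleaner presentations of the paper's Step 2 and Step 4, not a different argument.
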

\begin{lemma}
Let $p$ be an odd prime, say $p=2n+1$. Then there exists $l,m \in \mathbb Z$ such that $p | (1+l^2+m^2)$. 
\end{lemma}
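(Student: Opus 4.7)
The plan is to use a pigeonhole argument on two well-chosen subsets of $\mathbb{Z}/p\mathbb{Z}$, each of size $n+1$. Since their combined cardinality $2(n+1) = 2n+2$ exceeds $p = 2n+1$, they must intersect, and any common element will produce the desired $l,m$.

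Concretely, I would define
\[
A = \{\, l^2 \bmod p \mid 0 \le l \le n \,\} \quad \text{and} \quad B = \{\, -1 - m^2 \bmod p \mid 0 \le m \le n \,\}
\]
as subsets of $\mathbb{Z}/p\mathbb{Z}$. The first step is to verify that $|A| = n+1$. Suppose $l_1^2 \equiv l_2^2 \pmod{p}$ with $0 \le l_1, l_2 \le n$. Then $p$ divides $(l_1 - l_2)(l_1 + l_2)$, and since $p$ is prime, it divides one of the factors. But $|l_1 - l_2| \le n < p$ and $l_1 + l_2 \le 2n < p$, so $p$ can only divide $l_1 - l_2$, forcing $l_1 = l_2$. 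Thus the $n+1$ values $0^2, 1^2, \ldots, n^2$ are pairwise distinct modulo $p$, and $|A| = n+1$. The set $B$ is just a translate-negation of the same squares, so by the identical argument $|B| = n+1$.

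Now I would apply pigeonhole to $A$ and $B$ inside the $(2n+1)$-element set $\mathbb{Z}/p\mathbb{Z}$. Since $|A| + |B| = 2n + 2 > 2n+1 = p$, the sets $A$ and $B$ cannot be disjoint. Pick any element in $A \cap B$: by definition there exist integers $l, m$ with $0 \le l, m \le n$ such that $l^2 \equiv -1 - m^2 \pmod{p}$, which is exactly the congruence $p \mid 1 + l^2 + m^2$ we wanted.

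There isn't really a serious obstacle here; the only subtlety is in recognizing that counting squares plus a shifted copy of squares gives enough elements to force a collision, and in being careful that the injectivity bound $0 \le l_1, l_2 \le n$ (rather than $0 \le l \le p-1$) is what keeps $|A| = n+1$ rather than $(p-1)/2 + 1$ by accident. Everything else is routine.
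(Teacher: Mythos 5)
Your proof is correct and is essentially identical to the paper's: both arguments count the $n+1$ distinct residues $\{l^2 \bmod p : 0 \le l \le n\}$ and the $n+1$ distinct residues $\{-1-m^2 \bmod p : 0 \le m \le n\}$, then invoke pigeonhole since $2(n+1) > p$ to force a common element, yielding $l^2 \equiv -1 - m^2 \pmod p$. No differences worth noting.
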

\begin{proof}
We know that $\mathbb Z/p\mathbb Z$ is a field. Consider the set $[n]=\{0,1,\ldots, n\}$. Let $x\neq y \in [n]$. Then $x^2\not\equiv y^2 \imod p$. For if $x^2\equiv y^2 \imod p$ then $(x^2-y^2)\equiv 0\imod p$ and hence either $x-y=0$ or $x+y=0$ in $\mathbb Z/p\mathbb Z$. Since $0< x+y < n+n < p$ we can't have $x+y=0$.

Hence $T=\{0^2, 1^2, \ldots, n^2\}$ are all distinct mod $p$. Similarly the set $-1 - T=\{-1-0^2, -1-1^2, -1-2^2, \ldots, -1-n^2\}$ has all elements distinct. Both sets $T$ and $-1-T$ have $n+1$ elements and hence they have a common element. That is there exists $l^2\in T$ and $-1-m^2 \ in -1-T$ such that $l^2=-1-m^2$ and we are done. 
\end{proof}

\begin{proof}[Proof of the theorem]
Let $n$ be a natural number. One can easily verify this theorem for $n=0, 1, 2, 3, 4$. Thus we may assume $n\geq 5$.

Step 1. We can reduce this theorem to prove only for primes. For if $n$ is not a prime, write $n=n_1 n_2$ and use the four square identity. Since $n_1$ and $n_2$ both are sum of four squares, so is the product $n$.

Step 2. If $2m$ is a sum of two squares then so is $m$. Let $2m=x^2+y^2$ then either both $x,y$ are even or both are odd. Then write $m=(\frac{x-y}{2})^2 + (\frac{x+y}{2})^2$.

Step 3. Let $p$ be an odd prime. Then from the lemma a multiple of $p$ is a sum of four squares. That is, there exists $l,m$ such that $p | (1+l^2+m^2)$ and hence $kp = 1 +l^2 +m^2$ for some $k$. Further we assume $0<k<p$ (else we may go mod $p$). Thus if we already have $k=1$ we are done. Otherwise we do the following.

Step 4. We apply decent argument to arrive at a contradiction. Suppose there exists $m , a, b ,c, d$ integers such that $mp=a^2 + b^2 + c^2 + d^2$ and $1< m < p$. We show that we can find $n$ such that $1\leq n < m$ and $np=a'^2 + b'^2 + c'^2 + d'^2$ for some $a', b', c',d'$. This would provide contradiction if we begin with minimal $m$ with the required property. 

Without loss of generality $m>1$. If $m$ is even, say $m=2n$, then $mp$ is even which is a sum of $4$ squares and hence step 2 implies that $np$ is a sum of four squares and we are done. Now we assume that $m$ is odd $>2$. Now pick $w, x, y, z$ such that $w\equiv a\imod m$, $x\equiv b\imod m$, $y\equiv c\imod m$ and $z\equiv d\imod m$ and $-\frac{m}{2} < w,x,y,z < \frac{m}{2}$. Thus, $w^2+x^2+y^2+z^2 < 4.\frac{m^2}{4} = m^2$ and $w^2+x^2+y^2+z^2 \equiv a^2+b^2+c^2+d^2 \equiv 0 \imod m$. Hence $w^2+x^2+y^2+z^2=nm$ for some $n$. Since $w^2+x^2+y^2+z^2< m^2$ we have $n<m$. Check that $n\neq 0$. 

Thus, $(a^2+b^2+c^2+d^2)(w^2+x^2+y^2+z^2)=mp.nm = nm^2p$ is a sum of $4$ squares. Expanding out the left side (and replacing $x,y,z$ by their negative) we get $ (aw + bx +cy +dz)^2 + (-ax + bw - cz +dy)^2  + (-ay +bz + cw - dx)^2  + (-az - by + cx + dw)^2 = m^2.np$. Claim: $m$ divides each term on left, $(aw + bx +cy +dz), (-ax + bw - cz +dy), (-ay +bz + cw - dx), (-az - by +cx + dw)$. For this, first we write each of this term mod $m$. We get, 
$(aw + bx +cy +dz) = a^2+b^2+c^2+d^2 \imod m =0\imod m, (-ax + bw - cz +dy) = 0 \imod m, (-ay +bz + cw - dx)=0 \imod m, (-az - by +cx + dw)= 0 \imod m$. Hence, by dividing on the both side in the equation by $m^2$ we get that $np$ is a sum of $4$-squares. This proves the required result.
\end{proof} 

\begin{exercise}
Can you write every natural number as a sum of $2$ or $3$ squares? What about sum of $2$ or $3$ cubes?
\end{exercise}

\section{Application to Geometry}

Let $\mathbb H =\{a=a_0+a_1i+a_2j+a_3k \mid a_r\in \mathbb R, i^2=-1=j^2, ij=-ji=k\}$ be the real quaternions defined above. Recall that the conjugation is defined by $\bar a = a_0-a_1i-a_2j-a_3k$. We also denote $tr(a)=a+\bar a$ and $N(a)=a\bar a$. Now, we can think of $\mathbb H$ as a $4$-dimensional vector space over $\mathbb R$. Further we have a non-degenerate symmetric bilinear form $B$ on $\mathbb H$ induced from $N$ given as follows: 
\begin{eqnarray*}
B(x,y)&=& \frac{N(x+y)-N(x)-N(y)}{2}\\
&=& \frac{1}{2}[ (x_0+y_0)^2 + (x_1+y_1)^2+ (x_2+y_2)^2 + (x_3+y_3)^2 -   (x_0^2 +x_1^2+x_2^2+x_3^2)\\&& - (y_0^2 +y_1^2+y_2^2+y_3^2)] \\
&=& x_0y_0+x_1y_1+ x_2y_2 + x_3y_3.
\end{eqnarray*}
The three dimensional subspace spanned by $i,j,k$ is called imaginary subspace. 
\begin{exercise}
Let $V:= Im(\mathbb H)=<i,j,k>$ be the vector subspace of $\mathbb H$.
\begin{enumerate}
\item[(a)] For $a=\cos(\theta)j+\sin(\theta)k$, compute the matrix of the map $\phi_a\colon V\rightarrow V$ defined by $\phi_a(v)=ava^{-1}$. Show that $\phi_a = \begin{pmatrix} -1 && \\ &\cos(2\theta) & \sin(2\theta) \\ &\sin(2\theta) & -\cos(2\theta) \end{pmatrix}$.
\item[(b)] Compute the matrix of the map $\psi\colon V\rightarrow V$ given by $\psi(v)=ivi$. 
Show that $\psi$ is a reflection. The matrix of $\psi$ is $\diag(-1, 1, 1)$.
\end{enumerate}
\end{exercise}

\begin{exercise}
Use the group homomorphism given by norm $N\colon \mathbb H^*\rightarrow \mathbb R^*$ defined by $a\mapsto N(a)=a\bar a$ to show that $\mathbb H^1:= ker(N)$ is a group. Hence, show that the sphere $\mathbb S^3$ is a group.
\end{exercise}

\noindent It is an amazing fact that the sphere $\mathbb S^n \subset \mathbb R^{n+1}$ is a group if and only if $n=1, 3$ or $7$. This phenomena is related to being able to put composition algebra structure on $\mathbb R^{m}$ which is possible if and only if $m=1, 2, 4$ or $8$. The composition algebras of dimension $1$ is the field $\mathbb R$, of dimension $2$ is either $\mathbb R\times \mathbb R$ or $\mathbb C$, of dimension $4$ is either $M_2(\mathbb R)$ or $\mathbb H$ and of dimension $8$, two possibilities, called octonions. 
\begin{exercise}
For a non-zero $a\in \mathbb H$ define a map $\phi_a\colon \mathbb H\rightarrow \mathbb H$
by $\phi_a(x)=axa^{-1}$. Prove the following:
\begin{enumerate}
\item Show that $\phi_a$ preserves norm, hence is in $O_4(\mathbb H, N)$.
\item Show that $\phi_a(1)=1$.
\item Show that if $x\perp 1$ then $\phi_a(x) \perp 1$. Thus $\phi_a$ restricts to $1^{\perp}=V$.
\end{enumerate}
\end{exercise}
\noindent Thus, we define a map $a\mapsto \phi_a$ from $\mathbb H^1$ to $SO_3(\mathbb R)$ and we get the following exact sequence:
$$1\rightarrow \{\pm 1\} \rightarrow \mathbb H^1 \rightarrow SO_3(\mathbb R) \rightarrow 1.$$
\begin{exercise}
Consider $V=Im(\mathbb H)$. Compute the conjugation map $\phi_a\colon V\rightarrow V$ for
$a=a_0+a_1i$ and $a=a_2j+a_3k$.
\end{exercise}
\begin{exercise}
Show that the map $\phi \colon \mathbb H^1 \rightarrow SO_3(\mathbb R)$ defined by $a\mapsto \phi_a$ is a surjective group homomorphism with kernel $\{\pm 1\}$. Thus $\mathbb H^1$ is a double cover of the group $SO_3(\mathbb R)$.  
\end{exercise}
\noindent In the above exercise, if needed, you may use connectedness.
\begin{exercise}
Show that an element of $SO(3)$ always fixes a non-zero vector, i.e., $1$ is an eigenvalue. Thus up to conjugacy it is of the following form 
$$\begin{pmatrix}1&&\\&\cos(\theta) &-\sin(\theta)\\ & \sin(\theta)&\cos(\theta) \end{pmatrix}.$$
\end{exercise}

\begin{exercise}
For $a\in \mathbb H^1$ define the map $l_a\colon \mathbb H \rightarrow \mathbb H$ by $x\mapsto ax$. Show that its an isometry. This defines a group homomorphism $l\colon \mathbb H^1 \rightarrow O_4(\mathbb R)$.
\end{exercise}

\section{The group $SU_2(\mathbb C)$} 
The group $SU_2(\mathbb C) = \{X\in M_2(\mathbb C) \mid \tr{\bar X} X=I, \det(X)=1\}$. 
\begin{exercise}
Compute and show that $X\in SU_2(\mathbb C)$ is of the form $\begin{pmatrix} w & -z \\ \bar z & \bar w\end{pmatrix}$ for some $w, z\in \mathbb C$ satisfying $w\bar w + z\bar z =1$. 
\end{exercise}

\begin{exercise}
\begin{enumerate}
\item Recall the realization of $\mathbb H$ as $2\times 2$ complex matrices and note that $\mathbb H^1$ is identified with $SU_2(\mathbb C)$.
\item Show that $\mathbb H^1\cong S^3$ is group isomorphic to $SU_2(\mathbb C)$.
\item Use this to get the double cover map $SU_2(\mathbb C)\rightarrow SO_3(\mathbb R)$.
\begin{center}
\begin{tikzcd}
\mathbb H^1 \arrow[rr] \arrow[rd]&  & \arrow[dl]SU_2(\mathbb C) \\ 
&SO_3(\mathbb R)&
\end{tikzcd}
\end{center}
\end{enumerate}
\end{exercise}

\begin{exercise}
Show that $SU_2(\mathbb C)$ is simply connected and $SO_3(\mathbb R)\cong \mathbb RP^3$.  
\end{exercise}

Here we describe the topology of $SO_3(\mathbb R)$. Let us first recall how we get projective spaces. We begin with $\mathbb RP^2$. This is suppose to be obtained by antipodal identification of $S^2$. We can further think of this as the upper hemisphere with boundary circle being identified, which after projecting down in the plane becomes solid circle (disc) in the plane with points on the boundary being identified. We can carry forward this idea to any higher dimension.

Now, any element $A\in SO_3(\mathbb R)$ has a fixed axis and it is rotation in the perpendicular plane. So, $A$ can be associated to $(v,\theta)\in S^2\times [-\pi, \pi]$ where the first coordinate $v$ represents a direction and the second coordinate represents the rotation amount. Notice that the rotation by $\pi$ and $-\pi$, in this representation, is same and hence in this representation we have to further identify $\pi$ with $-\pi$. Thus the object, $S^2\times [-\pi, \pi]$, is a solid sphere (disc) of radius $\pi$ together with the property that points on the surface are identified. This object is nothing but $\mathbb RP^3$.


\chapter{Symplectic groups}

Let $V$ be a vector space of dimension $2l$ over a field $k$. We also assume $char(k)\neq 2$. There is a unique non-degenerate skew-symmetric linear form $\beta$ up to equivalence on $V$. We fix a basis of $V$ indexed as follows: $\{e_1, \ldots, e_l, e_{-1}, \ldots, e_{-l}\}$ which gives the following skew-symmetric matrix $J$ corresponding to the form $\beta$:
$$J=\begin{pmatrix} &I_l \\-I_l & \end{pmatrix}.$$
The symplectic group is the set of isometries of the form $\beta$, which in matrix form is the set of symplectic matrices with respect to this form:
$$Sp_{2l}=\{X\in GL_{2l} \mid \tr XJX = J\}.$$
In the algebraic groups notation, this is a group of type $C_l$. We also define, the projective symplectic group, 
$$PSp_{2l}=\frac{Sp_{2l}}{\mathcal Z(Sp_{2l})}.$$

\begin{exercise}
When $l=1$, show that $Sp_2=SL_2$.
\end{exercise}
Thus, we always assume $l>1$. 
\begin{exercise}
For $g\in Sp_{2l}$, we have $\det(g)=1$. Hence, $Sp_{2l}$ is a subgroup of $SL_{2l}$.
\end{exercise}
Hint: Use Pfaffian to prove this. The Pfaffian $Pf(A)$ is a polynomial associated to a skew-symmetric matrix $A$ which is non-zero when the matrix is invertible of even size. It also has the propery that $Pf(\tr X A X) = \det(X) Pf(A)$. 

\begin{exercise}
Show that the map $GL_l \rightarrow Sp_{2l}$ given by $A\mapsto \begin{pmatrix} A & \\ & \tr A^{-1}\end{pmatrix}$ is an injective group homomorphism.
\end{exercise}

\begin{exercise}
Compute the center $\mathcal Z(Sp_{2l})=\{\pm I\}$.
\end{exercise}

\section{Symplectic Lie algebra}
Notice that we are going to use $\{1, \ldots, l, -1, \ldots, -l\}$ as indices instead of using $1$ upto $2l$. The Lie algebra of the symplectic group $Sp_{2l}$ is 
$$\mathfrak{sp}_{2l} = \{X\in \mathfrak{gl}_{2l} \mid \tra XJ = -JX\}$$
where $\mathfrak{gl}_{2l}$ is the set of $2l$ size matrices.
\begin{exercise}
For $X\in \mathfrak{sp}_{2l}$ show that $tr(X)=0$.
\end{exercise}
\begin{exercise}
Let $\begin{pmatrix} W&X \\Y & Z \end{pmatrix}$ be in $\mathfrak{sp}_{2l}$ written in block form of size $l$. Show that $Z=-\tra W$, $X=\tra X$ and $Y=\tra Y$. 
\end{exercise}
Hint: We can do block multiplication here.
\begin{exercise}\label{basis-sp}
Show that the following set of elements for $1\leq i, j \leq l$, 
$$\{(e_{i,j}-e_{-j,-i}) \text{\ for}\; i\neq j, \} \cup \{(e_{i,-j}+e_{j,-i}) \text{\ for}\; i<j,\} \cup  
\{(e_{-i,j}+e_{-j,i}) \text{\ for}\; i<j\}, \cup \{e_{i,-i}, e_{-i,i}\}
$$ 
form a basis of the Lie algebra together with the diagonals $\{e_{i,i} - e_{-i,-i}\}$.
\end{exercise}

\begin{exercise}
Compute the dimension of the Lie algebra $\mathfrak{sp}_{2l}$.
\end{exercise}

\begin{exercise}
Show that each of the matrices in the basis above (except the diagonal ones) is nilpotent.
\end{exercise}

\begin{exercise}[Chevalley generators]\label{ch-gen-sp}
The following elements belong to the symplectic group for any $t\in k$ and for any $1\leq i,j \leq l$:
\begin{eqnarray*}
x_{i,j}(t)&=& I +t(e_{i,j}-e_{-j,-i}) \text{\ for}\; i\neq j, \\ 
x_{i,-j}(t)&= &I +t(e_{i,-j}+e_{j,-i}) \text{\ for}\; i<j,  \\ 
x_{-i,j}(t)&=& I +t(e_{-i,j}+e_{-j,i}) \text{\ for}\; i<j, \\ 
x_{i,-i}(t) &=& I +t e_{i,-i}, \\ 
x_{-i,i}(t)&= & I+te_{-i,i}.
\end{eqnarray*}
In fact, these matrices generate the symplectic group. This is a non-trivial exercise but not very difficult. See https://arxiv.org/pdf/1504.03794.pdf for an algorithmic proof similar to row-column operations in the $SL_n$ case.
\end{exercise}

\begin{exercise}
Assume that the Chevallley generators in the above exercise~\ref{ch-gen-sp} generate the symplectic group. Use this to prove the following:
\begin{enumerate}
\item For $X\in Sp_{2l}$, we have $\det(X)=1$.
\item For $X\in Sp_{2l}$, we have $\tr X\in Sp_{2l}$.
\end{enumerate} 
\end{exercise}

\section{Root Datum}
\begin{exercise}
The set $T=\{\diag(\lambda_1, \ldots, \lambda_l, \lambda_1^{-1}, \ldots, \lambda_l^{-1})\mid \lambda_i\in k^*\}$ is a maximal torus in $Sp_{2l}$. Thus, rank of $Sp_{2l}$ is $l$.
\end{exercise}

\begin{exercise}
Fix characters $\chi_i \colon T \rightarrow \mathbb G_m$ by $\chi_i(\diag(\lambda_1, \ldots, \lambda_l, \lambda_1^{-1}, \ldots, \lambda_l^{-1}))= \lambda_i$. Show that the character group $X^*(T)\cong \mathbb Z^l$.
\end{exercise}

\begin{exercise}
Fix co-characters $\mu_i \colon \mathbb G_m \rightarrow T$ by $\chi_i(t) = \diag(1, \ldots, t\ldots ,1 \ldots, \ldots , 1)$ where $t$ is at $i^{th}$ place. Show that the co-character group $X_*(T)\cong \mathbb Z^l$.
\end{exercise}

\begin{exercise}
Consider the $Ad \colon Sp_{2l} \rightarrow GL(\mathfrak{sp}_{2l})$ given by $Ad(g)(X)=gXg^{-1}$. We restrict this map to the maximal torus $T$ and obtain the simultaneous decomposition of $\mathfrak{sp}_{2l}$,
$$\mathfrak{sp}_{2l}=\mathfrak t \bigoplus \left ( \bigoplus_{\chi\in X^*(T)} \mathfrak g_{\chi} \right ).$$
Show that the diagonals $\mathfrak t$ form a $T$-invariant subspace with respect to the $0$ character. Show that each of the basis vectors in the exercise~\ref{basis-sp}, other than the diagonals, form a $1$ dimensional $T$-invariant subspace $\mathfrak g_{\chi}$.
\end{exercise}
Hint: Use the action to compute the eigen-values which are the charcters in the exercise below. Since these turn out to be distinct they are, each one of them, all possible eigen-spaces.
\begin{exercise}
Compute the set of all non-trivial characters appearing in the above decomposition $P\subset X^*(T)$. Show that 
$$P=\{\chi_i-\chi_j \mid 1\leq i\neq j \leq l\} \bigcup \{\pm (\chi_i+\chi_j) \mid 1\leq i < j \leq l\} \bigcup \{\pm 2\chi_i \mid 1\leq i \leq l\}. $$
\end{exercise}
\begin{exercise}[Co-characters]
Compute the co-charactrs $Q=P^{\vee} \subset X_*(T)$ and show that $P^{\vee}=\{\mu_i-\mu_j \mid 1\leq i\neq j \leq l\} \bigcup \{\pm (\mu_i+\mu_j) \mid 1\leq i < j \leq l\} \bigcup \{\pm \mu_i \mid 1\leq i \leq l\}$.
\end{exercise}

\begin{exercise}
Consider the isotropic flag 
$${0}\subset V_1 \subset \ldots \subset V_i \subset \ldots \subset V_l $$
where $V_i=<e_1, \ldots, e_i >$.
Then $B$, the stabiliser of this flag, is a Borel subgroup. Let us explicitely compte this subgroup. For $g\in B$,
$g(e_i)\in V_i= <e_1, \ldots, e_i >$ for $1\leq i \leq l$. Now, we know $G(W^{\perp})\subset W^{\perp}$. Thus, 
$$V_l^{\perp} \subset V_{l-1}^{\perp} \subset \ldots \subset V_i^{\perp} \subset \ldots \subset V_1^{\perp} \subset {0}^{\perp}$$
and is naturally fixed by $g$ as well. That is,
$$<e_{1}, \ldots, e_{l}> \subset <e_{1}, \ldots, e_{l}, e_{-l}> \subset \ldots \subset <e_1, \ldots, e_l, e_{-2}, \ldots, e_{-l}> \subset V.$$
Thus we get $g(e_{-1})\in <e_1, \ldots, e_l, e_{-1}, \ldots, e_{-l}> $, $g(e_{-2}) \in <e_1, \ldots, e_l, e_{-2}, \ldots, e_{-l}> $, and, in general $g(e_{-i})\in <e_1, \ldots, e_l, e_{-i}, \ldots, e_{-l}> $. Hence, the elements of $B$ are of the form $\begin{pmatrix} W&X \\ & \tra W^{-1} \end{pmatrix}$ where $W$ is upper triangular.
Thus, a Borel in $Sp_{2l}$ containing the fixed diagonal maximal torus is
$$\left \{\begin{pmatrix} W&X \\ & \tra W^{-1} \end{pmatrix} \mid W \right\}\bigcap Sp_{2l}.$$
\end{exercise}
\begin{exercise}
Thus a system of positive roots is
$$R^{+}=\{\chi_i \pm \chi_j \mid 1\leq i < j \leq l\} \bigcup \{2\chi_i \mid 1\leq i \leq l\} $$
and a set of simple roots is 
$$\Delta=\{\chi_1-\chi_2, \ldots, \chi_{l-1}-\chi_{l}, 2\chi_l \} $$
\end{exercise}

\begin{exercise}[Weyl group]
Show that the Weyl group is $S_l \ltimes (\mathbb Z/2\mathbb Z)^l$.
\end{exercise}

\section{$GSp_{2l}$}
The similitude symplectic group is
 $$GSp_{2l}=\{X\in GL_{2l} \mid \tr XJX = \lambda J, \text{\ for some\ } \lambda\in k^*\}.$$
The scalar $\lambda=\lambda_X$ varies with $X$ and is called similitude factor of the element $X$. 
\begin{exercise}
The map $\Psi \colon GSp_{2l} \rightarrow k^*$ given by $X\mapsto \lambda_X$ is a group homomorphism with kernel $Sp_{2l}$. 
\end{exercise}
 \begin{exercise}
 The set of scalars $\{\alpha. I \in GL_{2l} \mid \alpha\in k^*\}$ is the center of $GSp_{2l}$.
\end{exercise}
\begin{exercise}
The elements of $GSp_{2l}$ induce automorphisms of $Sp_{2l}$ acting via conjugation.
\end{exercise}


\chapter{Orthogonal groups}

Let $k$ be an algebraically closed field of $char\neq 2$. 
Let $V$ be a vector space of dimension $n$. We write $n=2l$ or $2l+1$ depending on when $n$ is even or odd. We fix a basis $\{e_1, \ldots, e_l, e_{-1}, \ldots, e_{-l}\}$ when $n$ is even and $\{e_0, e_1, \ldots, e_l, e_{-1}, \ldots, e_{-l}\}$ when $n$ is odd. Recall that the orthogonal group is $$O_{n} = \{g\in GL_{n} \mid \tra g J g =J\}$$ where $J=\begin{pmatrix} & I_l \\ I_l &\end{pmatrix}$ when $n=2l$ and $J=\begin{pmatrix} 1&&\\& & I_l \\& I_l &\end{pmatrix}$ when $n=2l+1$. Its connected component is the special orthogonal group $SO_n=O_n \cap SL_n$. Its Lie algebra is $$\mathfrak{o}_{n} = \{X\in \mathfrak{gl}_{n} \mid \tra XJ = -JX\}.$$

\section{Root datum of $D_l$ type}
Let us first deal with even dimensional one which is also called $D_l$ type.
\begin{exercise}\label{basis-o2l}
Let $\begin{pmatrix} W&X \\Y & Z \end{pmatrix}$ be in $\mathfrak{o}_{2l}$. Show that $Z=-\tra W$, $X= -\tra X$ and $Y= -\tra Y$. Hence, the following set of elements for $1\leq i, j \leq l$,
\begin{eqnarray*}
(e_{i,j}-e_{-j,-i}) &\text{for}\; i\neq j,\\ 
(e_{i,-j}-e_{j,-i})&\text{for}\; i<j,\\ 
(e_{-i,j}-e_{-j,i})& \text{for}\; i<j,
\end{eqnarray*}
form a basis of the Lie algebra together with the diagonals $e_{i,i}-e_{-i,-i}$.
\end{exercise}

\begin{exercise}
The set $T=\{\diag(\lambda_1, \ldots, \lambda_l, \lambda_1^{-1}, \ldots, \lambda_l^{-1})\mid \lambda_i\in k^*\}$ is a maximal torus in $O_{2l}$. Thus, the rank of $O_{2l}$ is $l$.
\end{exercise}

\begin{exercise}
Fix characters $\chi_i \colon T \rightarrow \mathbb G_m$ by $\chi_i(\diag(\lambda_1, \ldots, \lambda_l, \lambda_1^{-1}, \ldots, \lambda_l^{-1}))= \lambda_i$. Show that $X^*(T)\cong \mathbb Z^l$.
\end{exercise}

\begin{exercise}
Fix co-characters $\mu_i \colon \mathbb G_m \rightarrow T$ by $\chi_i(t) = \diag(1, \ldots, t\ldots ,1 \ldots,  , 1)$ where $t$ is at $i^{th}$ place. Show that $X_*(T)\cong \mathbb Z^l$.
\end{exercise}

\begin{exercise}
Consider the $Ad \colon O_{2l} \rightarrow GL(\mathfrak{o}_{2l})$ given by $Ad(g)(X)=gXg^{-1}$. We restrict this map to the maximal torus $T$ and obtain the simultaneous decomposition of $\mathfrak{o}_{2l}$. Show that the diagonals form a $T$-invariant subspace with respect to $0$ character. Show that each of the basis vector in the above exercise ~\ref{basis-o2l}, other than diagonals, forms a $1$ dimensional $T$-invariant subspace.
\end{exercise}

\begin{exercise}
Compute $P\subset X^*(T)$. Show that 
$$P=\{\chi_i-\chi_j \mid 1\leq i\neq j \leq l\} \bigcup \{\pm (\chi_i+\chi_j) \mid 1\leq i < j \leq l\}. $$
\end{exercise}
\begin{exercise}
Compute $Q=P^{\vee} \subset X_*(T)$. This gives us the root datum $(X^*(T), P, X_*(T), P^{\vee})$.
\end{exercise}

\begin{exercise}[Borel subgroup]
Consider the isotropic flag 
$${0}\subset V_1 \subset \ldots \subset V_i \subset \ldots \subset V_l $$
where $V_i=<e_1, \ldots, e_i >$.
Then $B$, the stabiliser of this flag, is a Borel subgroup. Explicitely compute this and get the elements of $B$  which are of the form $\begin{pmatrix} W&X \\ & \tra W^{-1} \end{pmatrix}$ where $W$ is upper triangular.
\end{exercise}

\begin{exercise}
The Weyl group is isomorphic to $S_l\ltimes (\mathbb Z/2\mathbb Z)^{l-1}$.
\end{exercise}
\section{Root datum of type $B_l$}
Now to deal with $n=2l+1$. 
\begin{exercise}
The Lie algebra elements are  
$$\mathfrak{o}_{2l+1}=\left\{\begin{pmatrix} &\tra c& -\tra b \\ b& W &X \\-c& Y & - \tra W \end{pmatrix}\mid  X= -\tra X, Y= -\tra Y \right\}.$$
Thus along with diagonals, the following form a basis, $1\leq i, j \leq l$,
\begin{eqnarray*}
e_{i,0} -e_{0,-i} \\
e_{0,i} - e_{-i,0}\\
(e_{i,j}-e_{-j,-i}) &\text{for}\; i\neq j,\\ 
(e_{i,-j}-e_{j,-i})&\text{for}\; i<j,\\ 
(e_{-i,j}-e_{-j,i})& \text{for}\; i<j,
\end{eqnarray*}
\end{exercise}

\begin{exercise}
The set $T=\{\diag(1, \lambda_1, \ldots, \lambda_l, \lambda_1^{-1}, \ldots, \lambda_l^{-1})\mid \lambda_i\in k^*\}$ is a maximal torus in $O_{2l+1}$. Thus, the rank of $O_{2l+1}$ is $l$.
\end{exercise}

\begin{exercise}
Compute $P\subset X^*(T)$ by the action of maximal torus on the Lie algebra. Show that 
$$P=\{\pm \chi_i \mid 1\leq i\leq l\} \bigcup \{\chi_i-\chi_j \mid 1\leq i\neq j \leq l\} \bigcup \{\pm (\chi_i+\chi_j) \mid 1\leq i < j \leq l\}. $$
\end{exercise}
This root datum is dual of $C_l$ type.
\begin{exercise}
Compute the co-characters $P^{\vee}$.
\end{exercise}
\begin{exercise}
Compute the centraliser of the element $\diag(1, -1, -1)$ in $SO(3)$.  
\end{exercise}

\chapter{Quaternion Algebra}\label{quaternion}

Quaternion algebras are central object in mathematics. The real quaternions were discovered by Hamilton in 1843. However one can define quaternions over any field $k$. There are several ways to construct a quaternion algebra. We collect  some of them here for fields of characteristic $\neq 2$.

\section{Introduction and Definition}

Let $k$ be a field. In this note we consider algebras over $k$ which are associative (unless mentioned otherwise) with identity and need not be commutative. A $k$-algebra which has no proper two-sided ideal with center $k$ is called a central simple algebra. Artin-Wedderburn theorem gives structure of such a finite dimensional algebra. It says that any central simple algebra is isomorphic to $M_n(D)$ where $D$ is a central division algebra over $k$. A quaternion algebra (sometimes called generalised quaternion algebra)  over $k$ is a central simple algebra over $k$ of dimension four. The Artin-Wedderburn theorem implies that any quaternion algebra is either isomorphic to $M_2(k)$ (called split quaternion algebra) or is isomorphic to a division algebra over $k$ of dimension $4$. In this note we will describe several ways of constructing quaternions over a field of characteristic $\neq 2$.

Apart from being a central object in mathematics, specially in number theory and algebraic groups, quaternions play fundamental role in physics and computer graphics. We will briefly indicate these in the section on applications. 

Let $k$ be a field of characteristic $\neq 2$. 
Let $a,b\in k^*$. Then we can define a multiplication on the four dimensional $k$-vector space $Q=k.1\oplus ki\oplus kj \oplus kij$  by $i^2=a$, $j^2=b$, $ij=-ji$ and $1$ is multiplicative identity. This makes $Q$ a central simple algebra and hence is a quaternion algebra. We denote this algebra $Q=\left(\frac{a,b}{k}\right)$. In fact, any quaternion algebra is of this form. 

We have conjugation operation on $Q$ defined by $\bar x:=x_0-x_1i-x_2j-x_3ij$ where $x=x_0+x_1i+x_2j+x_3ij$. This satisfies the following relation $\overline{xy}=\bar y\bar x$. We can define trace and norm of an element by $tr(x):=x+\bar x =2x_0$ and $N(x):=x\bar x=x_0^2-x_1^2a-x_2^2b+x_3^2ab$ and check that every element $x\in Q$ satisfies a quadratic polynomial $X^2-tr(x)X+N(x)=0$. The norm on $Q$ is a quadratic form $<1,-a,-b, ab>$ which, in fact, is a Pfister form $<1,-a><1,-b>$. Further the norm is multiplicative, i.e., $N(xy)=N(x)N(y)$. We denote the space of pure quaternions by $Q_0=\{x\in Q\mid tr(x)=0\}=\{x\in Q\mid x_0=0\}$. The space $Q_0$ is characterised by $x\in Q$ such that $x^2\in k$ and has quadratic form $N_0=<-a, -b, ab>$. Observe that $N_0$ is restriction of $N$ to $Q_0$. We abuse notation and denote the corresponding bilinear form by $N_0$ as well.

In the constructions below we would need to understand quadratic \'{e}tale algebras. Any such algebra is of the form $K\cong k[X]/<X^2-a>$ for some $a\in k^*$. Thus $K\cong k\times k$ (also called split) or is a quadratic field extension. The algebra $K$ is split if and only if $a\in (k^*)^2$. By taking norm we get norm subgroup $D_a=N_{K/k}(K^*)=\{x-ay^2\}\subset k^*$. To get a quaternion algebra we need $b\in k^*$ as well. If we take $b\in k^*-D_a$ then we get quaternion division algebras.

\section{Graded tensor product}

We refer to~\cite{gr} chapter 8, page 71 for this section. 
Let $A$ and $B$ be two finite dimensional associative (not necessarily commutative) $k$-algebras with identity. Then the algebra $A\otimes B$ is an algebra where multiplication is defined by $(a\otimes b). (a'\otimes b') = aa'\otimes bb'$. Further if $A$ and $B$ both are commutative then $A\otimes B$ is commutative.
 
Now suppose $A=A_0\oplus A_1$ and $B=B_0\oplus B_1$ are $\mathbb Z/2\mathbb Z$ graded where the subscript denotes the degree; $0$ degree term and degree $1$ term. Then we have graded tensor product $A\hat\otimes B$ with algebra product defined as follows:
$$(a\otimes b). (a'\otimes b') = (-1)^{\deg(b)\deg(a')}aa'\otimes bb'$$ where degree $0$ term is $(A_0\otimes B_0) \oplus (A_1\otimes B_1)$ and degree $1$ term is $(A_0\otimes B_1) \oplus (A_1\otimes B_0)$. Notice that even if $A$ and $B$ are commutative $A\hat\otimes B$ need not be. 

We begin with two quadratic \'{e}tale algebras $K$ and $L$ over $k$, say $K=k[\alpha]=k\oplus k\alpha$ and $L=k[\beta]=k\oplus k\beta$ with $\alpha^2=a$ and $\beta^2=b$. We have natural grading on $K$ and $L$. We consider $K\hat\otimes L$ with basis $1=1\otimes 1$, $i=\alpha\otimes 1$, $j=1\otimes \beta$  and $ij=\alpha\otimes \beta$. We claim that this gives us a quaternion algebra $Q\cong\left(\frac{a,b}{k}\right)$. We note that,
\begin{eqnarray*}
i^2&=&(\alpha\otimes 1).(\alpha\otimes 1)=(-1)^{0.1}\alpha^2\otimes 1^2=a.1\\
j^2&=&(1\otimes\beta).(1\otimes\beta)=(-1)^{1.0}1^2\otimes\beta^2=b.1\\
ij&=& (\alpha\otimes 1).(1\otimes\beta)=(-1)^0\alpha\otimes\beta=\alpha\otimes\beta\\
ji&=& (1\otimes\beta).(\alpha\otimes 1)=(-1)^{1.1}\alpha\otimes\beta=-ij.
\end{eqnarray*}
\begin{proposition}
Let $K$ and $L$ be two quadratic \'{e}tale extensions of $k$. Then $K\hat\otimes L$ is a quaternion algebra. Conversely, if $Q$ is a quaternion algebra over $k$ then there exist quadratic \'{e}tale extensions $K$ and $L$ of $k$ such that $Q\cong K\hat\otimes L$.
\end{proposition}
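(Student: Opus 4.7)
The forward direction is essentially achieved by the explicit computation performed immediately above the proposition. Writing $K = k[\alpha]$ with $\alpha^2 = a \in k^*$ and $L = k[\beta]$ with $\beta^2 = b \in k^*$, and naming the basis elements $1 = 1\otimes 1$, $i = \alpha\otimes 1$, $j = 1\otimes \beta$, $ij = \alpha\otimes \beta$, the calculation shows that $i^2 = a$, $j^2 = b$, and $ji = -ij$. My plan is to promote this into a proof by exhibiting the $k$-algebra map
\[
\Phi\colon \left(\tfrac{a,b}{k}\right) \longrightarrow K \hat\otimes L, \qquad i \mapsto \alpha\otimes 1,\ j \mapsto 1\otimes\beta,
\]
using the universal property of $\left(\frac{a,b}{k}\right)$ (the relations $i^2 = a$, $j^2 = b$, $ij = -ji$ are precisely the ones verified in the computation above). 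Since the four elements $1,i,j,ij$ map to the basis $1\otimes 1,\alpha\otimes 1,1\otimes\beta,\alpha\otimes\beta$ of $K\hat\otimes L$ and both sides have dimension $4$ over $k$, the map $\Phi$ is a $k$-algebra isomorphism. Since $\left(\frac{a,b}{k}\right)$ is a quaternion algebra by definition, so is $K\hat\otimes L$.

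For the converse, I would appeal to the fact cited in the preceding section that any quaternion algebra $Q$ over $k$ (with $\operatorname{char}(k)\neq 2$) is isomorphic to $\left(\frac{a,b}{k}\right)$ for some $a,b \in k^*$. Given such $a,b$, set $K = k[X]/\langle X^2 - a\rangle$ and $L = k[X]/\langle X^2 - b\rangle$. Because $a, b \in k^*$ and $\operatorname{char}(k)\neq 2$, the polynomials $X^2 - a$ and $X^2 - b$ are separable, so $K$ and $L$ are quadratic étale $k$-algebras (either split as $k\times k$ when $a$ resp.\ $b$ is a square, or a separable quadratic field extension otherwise). The forward direction then yields
\[
K \hat\otimes L \;\cong\; \left(\tfrac{a,b}{k}\right) \;\cong\; Q,
\]
as required.

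The only step that requires genuine care is checking that the map $\Phi$ above is well defined, i.e.\ that the defining relations of $\left(\frac{a,b}{k}\right)$ are actually satisfied in the graded tensor product; this is exactly the content of the four-line computation reproduced in the excerpt, so there is no real obstacle. The converse relies on the structure theorem identifying every quaternion algebra with some $\left(\frac{a,b}{k}\right)$, which is available from the earlier discussion and so may be quoted rather than proved. Thus the proposition reduces to tying together the explicit basis calculation with the classification, and no new computation is needed.
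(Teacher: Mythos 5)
Your proposal is correct and follows essentially the same route as the paper: the forward direction is the explicit relation check $i^2=a$, $j^2=b$, $ij=-ji$ in $K\hat\otimes L$ (which you upgrade, reasonably, with the universal property and a dimension count), and the converse writes $Q\cong\left(\frac{a,b}{k}\right)$ and takes $K=k[i]$, $L=k[j]$, exactly as the paper does. No gaps.
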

\begin{proof}
Form the discussion above it follows that given $K$ and $L$, the graded tensor product $K\hat\otimes L$ is a quaternion algebra. 
For converse, let $Q=\left(\frac{a,b}{k}\right)$. Then consider $K=k[i]$ and $L=k[j]$. This will give us $Q\cong K\hat\otimes L$.
\end{proof}
Further if either $K$ or $L$ is split then $Q$ is definitely split. In case both $K$ and $L$ are field and either $b\not\in D_a$ or $a\not\in D_b$ then $Q$ is division. 

\section{Composition Algebras and Doubling Method}

A composition algebra $C$ over a field $k$ is a not necessirly associative algebra over $k$ with identity together with a non-degenerate quadratic form $N$ on $C$ which is multiplicative, i.e., $N(xy) = N(x)N(y)$ for all $x,y \in C$.
It turns out that the possible dimensions of a composition algebra over $k$ are $1, 2, 4, 8$. 
Composition algebras of dimension $1$ and $2$ are commutative and associative, those of dimension $4$ are associative but not commutative and are quaternion algebras, and those of dimension $8$ are neither commutative nor associative called octonion algebra or  Cayley algebra. For more details we refer to the chapter 1 of \cite{sv}.

Let $K$ be a quadratic \'{e}tale algebra over $k$. Denote the non-trivial $k$-automorphism of $K$ by $x\mapsto \overline x$. Let $\lambda\in k^*$ and $Q=K\oplus K$. Define multiplication on $Q$ as follows:
$$(x_1,y_1).(x_2,y_2)=(x_1x_2+\lambda y_1\overline y_2, x_1y_2+\overline x_2y_1).$$
This makes $Q$ an algebra and is called doubling method. 
Suppose $K=k[\alpha]$ with $\alpha^2=a\in k^*$. The automorphism of $K$ is $x=x_1+\alpha y_1\mapsto \overline x=x_1-\alpha y_1$, i.e., $\overline \alpha=-\alpha$. Clearly $(1,0)$ is identity of $Q$. Denote $i=(\alpha,0)$ and $j=(0,1)$. Then
\begin{eqnarray*}
i^2&=& (\alpha,0)(\alpha,0)= (\alpha^2, 0)= a(1, 0)\\
j^2 &=& (0,1)(0,1)=(\lambda .1, 0)=\lambda (1, 0) \\
ij&=& (\alpha, 0)(0,1) = (0, \alpha)=\alpha(0,1)\\
ji&=& (0,1)(\alpha, 0) = (0,\overline\alpha)=-\alpha(0,1)=-ij
\end{eqnarray*}
Thus $Q\cong\left(\frac{a,b}{k}\right)$ where $a=\alpha^2$ and $b=\lambda$.
Further any quaternion algebra can be constructed this way. Clearly if $K$ is split or $\lambda\in D_a$ we get split quaternion algebra.


\section{Clifford Algebra}

Let $V$ be a vector space of dimension $n$ over $k$. Suppose $V$ has a quadratic form $q$ on it. Then we have Clifford algebra $C(V,q)$ of dimension $2^n$ defined by:
$$C(V,q)=\frac{T(V)}{\langle\{v\otimes v-q(v).1\mid v\in V\}\rangle}$$
where $T(V)$ is tensor algebra.
This algebra is $\mathbb Z/2\mathbb Z$ graded where grading is induced from that of $T(V)$. We refer to~\cite{gr}, chapter 8 for more details.

Let $V$ be a two dimensional vector space with a non-degenerate quadratic form $q$ on it. The Clifford algebra construction above gives us an algebra of dimension $4$. We claim that this algebra is a quaternion algebra. Suppose $\{x,y\}$ is an orthogonal basis of $V$ with $q(x)=a$ and $q(y)=b$ both $a,b\in k^*$ as $q$ is non-degenerate. Then $T(V)\cong k[x,y]$, non-commutative polynomial ring in two variables, and $C(V,q)\cong k[x,y]/\langle x^2-a, y^2-b, xy+yx\rangle$. By denoting $i=x$ and $j=y$ we see that $C(V,q)\cong \left(\frac{a,b}{k}\right)$. Further any quaternion algebra can be obtained this way by taking $V$ as space generated by orthogonal basis $i$ and $j$.

What condition on $(V,q)$ gives a division quaternion algebra?

\section{Cyclic Algebra}

Let $K/k$ be a cyclic field extension of degree $n$, i.e., $K/k$ is Galois with Galois group $Gal(K/k)\cong \mathbb Z/n\mathbb Z=<\sigma>$. Take $b\in k^*$. We define multiplication on $n$-dimensional $K$-vector space $A=K.1\oplus Ke\oplus Ke^2\oplus\cdots\oplus Ke^{n-1}$ to make it an algebra as follows:
$$e^n=b, \ e.\beta = \sigma(\beta) e.$$ 
The algebra $A=(K/k, \sigma, b)$ is called a cyclic algebra. For further details we refer to~\cite{pi} chapter 15, in particular, section 15.1.

We begin with a quadratic \'{e}tale algebra $K=k[\alpha]\cong k[X]/<X^2-a>$ over $k$ with $\sigma$ the non-trivial automorphism of $K$. We fix $b\in k^*$. Consider a two-dimensional $K$-vector space $Q=K.1\oplus K.e$ and define multiplication by $e^2=b$ and $e.\beta =\sigma(\beta)e$. We claim that the algebra obtained this way is a quaternion algebra. Define $i=\alpha 1$ and $j=1e$. Then,
$i^2=\alpha^2=a$, $j^2=1e.1e= e^2=b$, $ij=\alpha 1.1e=\alpha e$, $ji=1e.\alpha 1 = \sigma(\alpha)e=-\alpha e=-ij$.
Further any quaternion algebra can be obtained this way. Note that if $K$ is a field and $b\not\in D_a$ we get a division algebra.

\section{3-dimensional quadratic space with trivial discriminant}

To motivate the construction of this section we begin with analysing the multiplication on quaternions. We have $Q= k\oplus Q_0$. Thus multiplication on $Q$ 
\begin{eqnarray*}
xy&=&(x_0+x_1i+x_2j+x_3ij)(y_0+y_1i+y_2j+y_3ij)\\
&=&(x_0y_0+x_1y_1a+x_2y_2b-x_3y_3ab ) + (x_0y_1+x_1y_0-x_2y_3b+x_3y_2b)i \\
&& (x_0y_2+x_1y_3a+x_2y_0-x_3y_1a)j + (x_0y_3+x_1y_2-x_2y_1 + x_3y_0)ij
\end{eqnarray*}
can be rewritten as: 
$$xy=(x_0y_0-N_0(x,y), x_0w+y_0v+v\times w)$$
where $x=(x_0,v)$ and $y=(y_0,w)$ for $v,w\in Q_0$ and $\times\colon Q_0\times Q_0\rightarrow Q_0$ is a cross product defined by $v\times w=-(x_2y_3-x_3y_2)bi+(x_1y_3-x_3y_1)aj+(x_1y_2-x_2y_1)ij$. We need to check that this is a unique well defined cross product (i.e., $(Q_0,\times)$ satisfies properties of being a Lie algebra) on $Q_0$ with respect to $N_0$. Other properties are easy to verify using the above formula except the Jacobi identity: 
$u\times(v\times w)+w\times (u\times v)+v\times (w\times u)=0.$
We verify the following formal formula to compute the cross product: 
\begin{eqnarray*} v\times w &= & \det\begin{pmatrix}-bi&-aj&ij\\ x_1& x_2& x_3\\ y_1&y_2 &y_3\end{pmatrix}\\
u\times(v\times w) &= & \det\begin{pmatrix}-bi&-aj&ij\\ u_1& u_2& u_3\\ -b(x_2y_3-x_3y_2) & a(x_1y_3-x_3y_1) &x_1y_2-x_2y_1\end{pmatrix} \\ 
&=& vN_0(u,w)-wN_0(u,v).
\end{eqnarray*}
Hence,
\begin{eqnarray*}
 && u\times(v\times w)+w\times (u\times v)+v\times (w\times u)\\
&=& vN_0(u,w)-wN_0(u,v) + uN_0(w,v) - vN_0(w,u) + wN_0(v,u) - uN_0(v,w)\\
&=&0
\end{eqnarray*} 

We see that $(Q_0,N_0)$ has trivial discriminant which determines the cross product. Check that for any $u=u_1i+u_2j+u_3ij$ we get $N_0(u,v\times w)=u_1(x_2y_3-x_3y_2)ba-u_2(x_1y_3-x_3y_1)ab +u_3(x_1y_2-x_2y_1)ab=ab\det\begin{pmatrix}u_1 &u_2&u_3\\x_1& x_2& x_3\\y_1&y_2 &y_3\end{pmatrix}$. Note that $\bigwedge^3V=<i\wedge j\wedge ij>\cong k$ and $\bigwedge^3N_0(i\wedge j\wedge ij)=(-a)(-b)ab=(ab)^2$. Hence the map $\psi\colon \bigwedge^3(Q_0,N_0)\rightarrow (k,<1>)$ given by $i\wedge j\wedge ij\mapsto ab.1$ (i.e., multiplication by $ab$) is an isomorphism of one-dimensional quadratic spaces where $<1>$ denotes the bilinear form $(x,y)\mapsto xy$ on $k$. Thus $v\times w$ is uniquely determined by $N_0(u,v\times w)=\psi(u\wedge v\wedge w)$.

We give a construction for quaternion algebra starting from a rank $3$ quadratic space $V$ over $k$, with trivial discriminant. This construction is adaptation of the construction for Cayley algebras in \cite{th}.  
Let $B\colon V\cross V\rightarrow k$ be a non-degenerate bilinear form. Suppose that the discriminant of $(V,B)$ is trivial. That is, there exists $\psi\colon \bigwedge^3(V,B)\rightarrow (k,<1>)$ which is an isomorphism of quadratic spaces. We use this to define a vector product $\cross\colon V\cross V\longrightarrow V$ by the formula, $B(u,v\cross w)=\psi(u\wedge v\wedge w)$, for $u,v,w\in V$. 
Let $Q= k\oplus V$ be a $k$-vector space of dimension $4$. 
We define a multiplication on $Q$ by,
$$
(a,v)(b,w)=(ab-B(v,w), aw+bv+v\cross w)
$$
where $a,b\in k$ and $v,w\in V$. 
With this multiplication, $Q$ is a quaternion algebra over $k$, with norm $N(a,v)=a^2+B(v,v)$. 
The isomorphism class of $Q$ is independent of $\psi$ chosen. 
One can show that all quaternion algebras arise this way. 

\subsection{$3$-dimensional vector space and cross product}

Let $V$ be a $3$ dimensional vector space with a non-degenerate quadratic form $N=\langle a,b,c\rangle$ with respect to basis $\{e_1,e_2,e_3\}$. We wish to define a cross product on $V$ uniquely determined by $N$.

Denote $\tilde N=\langle \frac{c}{b}, \frac{c}{a},ab\rangle $. Now define the cross product $\times \colon V\times V \rightarrow V$ by the following formal formula: 
$$v\times w = \det \begin{pmatrix} b e_1 & ae_2 & \frac{c}{ab}e_3 \\ x_1 &x_2 & x_3 \\ y_1 & y_2 & y_3 \end{pmatrix}$$
where $v=(x_1,x_2,x_3)$ and $w=(y_1,y_2,y_3)$. 
It is easy to verify that this is bilinear and $v\times v=0$. To verify the Jacobi identity we first check the following formula $u\times(v\times w) = v\tilde N(u,w) - w\tilde N(u,v)$ for all $u,v,w\in V$. 
\begin{eqnarray*}
u\times (v\times w) &=& \det\begin{pmatrix} b e_1 & ae_2 & \frac{c}{ab}e_3 \\ u_1 & u_2 & u_3 \\ b(x_2y_3-x_3y_2) & -a(x_1y_3 - x_3y_1) & \frac{c}{ab}(x_1y_2-x_2y_1) \end{pmatrix} \\
&=& x_1e_1(\frac{c}{a}u_2y_2 + ab u_3y_3) - y_1e_1 (\frac{c}{a}u_2x_2 + ab u_3x_3) + x_2e_2(\frac{c}{b}u_1y_1 + ab u_3y_3) \\&& - y_2e_2 (\frac{c}{b}u_1x_1 + ab u_3x_3) +
x_3e_3(\frac{c}{b}u_1y_1 + \frac{c}{a} u_2y_2) - y_3e_3 (\frac{c}{b}u_1x_1 + \frac{c}{a} u_2x_2)   \\
&=& v\tilde N(u,w) - w\tilde N(u,v)
\end{eqnarray*}

Further we want the cross product to be uniquely determined by $N$, i.e., $N(u, v\times w)=0$ if $u=v$ or $u=w$. We verify the following formal formula:
$$N(u,v\times w)=\det \begin{pmatrix}abu_1 & ab u_2 & \frac{c^2}{ab}u_3 \\ x_1 & x_2 & x_3 \\ y_1 & y_2 & y_3\end{pmatrix}$$
and the equation $N(v,v\times w)=0$ gives $c^2=(ab)^2$, i.e., $Q$ should have trivial discriminant. 

\section{Geometric Method} 
To a quaternion algebra $Q=\left(\frac{a,b}{k}\right)$ one associates a conic $C(a,b)$ given by the equation $ax^2+by^2=z^2$ in the projective space $P^2$.  The attached conic does not depend on choice of a basis and isomorphism of $Q$ is equivalent to $k$-isomorphism of associated conics. Further one can prove that the quaternion algebra $Q$ is split if and only if the attached conic $C$ has a $k$-rational point. We refer the section 1.3 in \cite{gz} for this construction and proofs.

\section{Embedding inside Matrix algebra}

Let $Q=\left(\frac{a,b}{k}\right)$ be a quaternion algebra. Denote $K=k[X]/<X^2-a>=k[\alpha]$. The following map gives an embedding of $Q$ inside $M_2(K)$: 
$$(x_0+x_1i)+(x_2+x_3i)j=z_1+z_2j\mapsto \begin{pmatrix}z_1 &bz_2\\ \overline{z_2} & \overline{z_1}\end{pmatrix}.$$
One can also check that norm and trace is determinant and trace of the corresponding matrix.


\section{Some Facts and Applications}

\subsection{Classification over certain fields}
Classifying quaternion algebras over a field up to isomorphism is not an easy task. Here we mention some for example.
\begin{theorem}
\begin{enumerate}
\item Over an algebraically closed field the only quaternion is matrix algebra. 
\item Over a finite field the only quaternion algebra is matrix algebra.
\item Over reals any quaternion algebra is either isomorphic to $M_2(\mathbb R)$ or $\mathbb H=\left(\frac{-1,-1}{\mathbb R}\right)$.
\end{enumerate}
\end{theorem}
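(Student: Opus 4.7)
The strategy in all three parts rests on a single split-criterion: if $\left(\frac{a,b}{k}\right)$ has $a\in (k^*)^2$, say $a=\alpha^2$, then $(i-\alpha)(i+\alpha)=i^2-\alpha^2=0$ factors through nonzero elements, so by Artin--Wedderburn the algebra is not a division algebra and must therefore be $M_2(k)$. The rescaling $i\mapsto ci$, $j\mapsto dj$ also yields $\left(\frac{a,b}{k}\right)\cong\left(\frac{ac^2,bd^2}{k}\right)$, so only the classes of $a,b$ in $k^*/(k^*)^2$ matter. Part (1) follows immediately: over an algebraically closed field every element is a square, so the hypothesis of the criterion is satisfied for every quaternion algebra.

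For part (2) I would invoke Wedderburn's little theorem: every finite division ring is commutative, hence a field. But a quaternion algebra is genuinely noncommutative (since $ij=-ji$, $ij\neq 0$, and $\mathrm{char}(k)\neq 2$), so it cannot be a finite division ring, and Artin--Wedderburn forces $Q\cong M_2(\mathbb{F}_q)$. An alternative route avoiding Wedderburn is the Chevalley--Warning fact that every nondegenerate quadratic form over $\mathbb{F}_q$ in at least three variables is isotropic; applied to the four-variable norm form $\langle 1,-a,-b,ab\rangle$ this produces a nonzero element of norm zero, hence a zero-divisor in $Q$, putting us once more in the split case.

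Part (3) is the substantive one. Since $\mathbb{R}^*/(\mathbb{R}^*)^2=\{\pm 1\}$, up to isomorphism only the four pairs $(a,b)\in\{\pm 1\}^2$ can occur. Three of them contain a $+1$ and therefore split by the criterion, leaving $\left(\frac{-1,-1}{\mathbb{R}}\right)=\mathbb{H}$ as the only candidate nonsplit algebra. To finish, $\mathbb{H}$ is genuinely a division algebra because its norm $N(x)=x_0^2+x_1^2+x_2^2+x_3^2$ is positive definite, so $x^{-1}=\bar x/N(x)$ is an honest two-sided inverse of every nonzero $x$ (using $x\bar x=N(x)$ and multiplicativity of $N$, both established earlier in the chapter). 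Moreover $\mathbb{H}\not\cong M_2(\mathbb{R})$, since $M_2(\mathbb{R})$ has nonzero zero-divisors such as $e_{12}$ (with $e_{12}^2=0$) while $\mathbb{H}$ has none, so we end up with exactly two isomorphism classes.

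The step I expect to require the most care is the split-criterion itself: verifying that a single nonzero zero-divisor in $\left(\frac{a,b}{k}\right)$ really forces $Q\cong M_2(k)$ rather than some other four-dimensional algebra. This is exactly the place where the dimension hypothesis enters, via Artin--Wedderburn: as $Q$ is central simple of dimension $4$, it has the form $M_n(D)$ with $n^2\dim_k D=4$ and $D$ a central division algebra over $k$, so the only possibilities are $(n,\dim_k D)=(1,4)$ or $(2,1)$, and the presence of a zero-divisor excludes the first, leaving $Q\cong M_2(k)$.
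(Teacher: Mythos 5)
Your proof is correct, and it is worth noting that the paper states this theorem without any proof at all (it appears in the survey section ``Some Facts and Applications''), so there is no argument of the author's to compare against; yours fills the gap. The skeleton is the standard one and each step is sound: the Artin--Wedderburn dichotomy $Q\cong M_2(k)$ or $Q$ a division algebra (which the paper itself records at the start of the chapter) reduces everything to deciding when $Q$ has a zero divisor; the factorisation $(i-\alpha)(i+\alpha)=0$ handles the algebraically closed case; Wedderburn's little theorem (or, as you note, isotropy of the norm form via Chevalley--Warning) handles finite fields; and over $\mathbb R$ the reduction to square classes leaves only $\left(\frac{-1,-1}{\mathbb R}\right)$ as a possible non-split algebra, which is indeed division because its norm $\langle 1,1,1,1\rangle$ is anisotropic. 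Two very minor points of care: in part (3) you use the split criterion also when $b$ (rather than $a$) is a square, which needs the symmetry $\left(\frac{a,b}{k}\right)\cong\left(\frac{b,a}{k}\right)$ given by swapping $i$ and $j$; and your enumeration tacitly uses the paper's assertion that every quaternion algebra is of the form $\left(\frac{a,b}{k}\right)$. Both are standard and neither affects correctness.
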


For the airthmetic aspect of quaternions the standard source of reference is~\cite{vi}.

\subsection{Groups of type $A_1$}

Let $G$ be an algebraic group defined over $k$. It is one of the important problem in the theory of algebraic groups to classify them. Given a quaternion algebra $Q$ over $k$ one can form the group $SL_1(Q)$ which is a group defined over $k$ of type $A_1$. In fact any group of type $A_1$ can be obtained this way.

\subsection{Merkurjev-Suslin Theorem}
\begin{theorem}
Any $2$-torsion element in Brauer group of $k$ is equivalent to $Q_1\otimes Q_2\otimes\cdots \otimes Q_n$ where $Q_i$ are quaternion algebra over $k$.
\end{theorem}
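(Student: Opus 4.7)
The plan is to show that every class in ${}_2\mathrm{Br}(k)$ is a finite sum of classes of quaternion algebras, where the sum in $\mathrm{Br}(k)$ is induced by $\otimes$. By Brauer-Wedderburn every such class is represented by a unique division algebra $D$, and a classical theorem of Brauer says the index of $D$ is a power of the prime factors of its exponent, so here $\mathrm{ind}(D) = 2^m$ for some $m \geq 0$. I would argue by induction on $m$. The base cases $m=0$ (split, empty product) and $m=1$ ($D$ is already a quaternion algebra, from the constructions of the previous section) are immediate.

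The inductive step reduces to the following claim: if $D$ is a division algebra with exponent $2$ and index $2^m$ for some $m \geq 2$, then there exist $a,b \in k^*$ such that $D \otimes_k \left(\frac{a,b}{k}\right)$ has index strictly smaller than $2^m$ (necessarily $2^{m-1}$, since the exponent remains $2$). Once this is established, iterating pairs off $[D]$ with at most $m$ quaternion classes until we reach index $1$, giving an expression $[D] = \sum_{i=1}^{N}\bigl[Q_i\bigr]$ in $\mathrm{Br}(k)$ with $N \leq m$; unwinding the Brauer relation lifts this to an honest isomorphism with a matrix algebra over the tensor product $Q_1 \otimes \cdots \otimes Q_N$, which is exactly the stated theorem.

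The main obstacle, and indeed the entire substance of Merkurjev's 1981 theorem, is producing such a pair $(a,b)$. My plan for that step would follow Merkurjev's geometric and $K$-theoretic route: form the Severi-Brauer variety $X = \mathrm{SB}(D)$, a twisted $(2^m-1)$-dimensional projective space whose function field $k(X)$ splits $D$, so that $[D] \in \ker\bigl(\mathrm{Br}(k) \to \mathrm{Br}(k(X))\bigr)$. One then analyzes this kernel via the Brown-Gersten-Quillen spectral sequence computing $K$-theory of $X$, which identifies the relevant piece of $\ker$ with an explicit subquotient of $K_2^M(k)/2$. Combined with the Milnor conjecture in degree two, namely the isomorphism $K_2^M(k)/2 \cong I^2(k)/I^3(k)$ between Milnor $K$-theory mod $2$ and the graded Witt ring, one shows that the symbol map $\{a,b\} \mapsto \bigl[\left(\frac{a,b}{k}\right)\bigr]$ surjects onto that subquotient, furnishing the desired $(a,b)$. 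None of this cohomological machinery has been developed in the preceding chapters, so a self-contained proof is genuinely out of reach at the level of this text, and one must refer to Merkurjev's original paper for the full argument.
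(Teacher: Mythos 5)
The text itself offers no proof of this statement --- it appears under ``Some Facts and Applications'' purely as a citation of Merkurjev's theorem --- so there is no argument of the paper's to measure yours against. Your reduction scaffold is sound: by Brauer's theorem relating exponent and index, the division algebra $D$ underlying a $2$-torsion class has index $2^m$; the cases $m\le 1$ are immediate; and since tensoring with a degree-$2$ algebra can change the index by at most a factor of $2$ in either direction, the whole theorem is indeed equivalent to the claim that for $m\ge 2$ there is a quaternion algebra $Q$ with $\mathrm{ind}(D\otimes_k Q)=2^{m-1}$. You also point to the right circle of ideas for that claim: the function field of the Severi--Brauer variety (for a quaternion algebra, the conic of the ``Geometric Method'' section), Quillen's computation of the $K$-theory of Severi--Brauer varieties, and the identification of $\ker\bigl(\mathrm{Br}(k)\to\mathrm{Br}(k(X))\bigr)$ in terms of symbols.

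Be clear-eyed, however, that what you have written is a roadmap rather than a proof: the inductive step you isolate \emph{is} Merkurjev's theorem, and neither this text nor your proposal establishes it, as you candidly admit. Two smaller points. First, your appeal to ``the Milnor conjecture in degree two'' has the logical order reversed: in Merkurjev's argument the isomorphism $I^2(k)/I^3(k)\cong {}_2\mathrm{Br}(k)$ is a \emph{consequence} of the surjectivity of the Galois symbol $K_2^M(k)/2\to {}_2\mathrm{Br}(k)$, not an input to it; the symbol $\{a,b\}\mapsto\bigl[\left(\frac{a,b}{k}\right)\bigr]$ maps $K_2^M(k)/2$ to the Brauer group directly, and the genuinely hard input concerns $K_2$ of the function fields involved (Suslin's results, and Hilbert 90 for $K_2$ in the general Merkurjev--Suslin theorem). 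Second, for a complete treatment at the level of this text's own bibliography, Chapter 8 of \cite{gz} is the reference to give.
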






\begin{thebibliography}{99}
\normalsize
\bibitem[Bo]{bo} Borel, Armand; {\it ``Linear algebraic groups''}, Second edition. Graduate Texts in Mathematics, 126. Springer-Verlag, New York, 1991. xii+288 pp.
\bibitem[BG]{bg} Benson, C. T.; Grove, L. C.; {\it ``Finite reflection groups''}, Bogden \& Quigley, Inc., Publishers, Tarrytown-on-Hudson, N.Y., 110 pp. 1971. 
\bibitem[Ca]{ca} Carter, Roger W.; {\it ``Simple groups of Lie type''}, Pure and Applied Mathematics, Vol. 28. John Wiley \& Sons, London-New York-Sydney, 1972. viii+331 pp. 
\bibitem[Ca2]{ca2} Carter, Roger W.; {\it ``Finite groups of Lie type - Conjugacy classes and complex characters''}, Pure and Applied Mathematics (New York). A Wiley-Interscience Publication. John Wiley \& Sons, Inc., New York, 1985. xii+544 pp.
\bibitem[Ca3]{ca3} Carter, R. W.; {\it ``Lie algebras of finite and affine type''}, Cambridge Studies in Advanced Mathematics, 96. Cambridge University Press, Cambridge, 2005. xviii+632 pp.
\bibitem[CS]{cs} Conway, John H.; Smith, Derek A.; {\it``On quaternions and octonions: their geometry, arithmetic, and symmetry''}, A K Peters, Ltd., Natick, MA, 2003.
\bibitem[CSM]{csm} Carter, Roger; Segal, Graeme; Macdonald, Ian; {\it ``Lectures on Lie groups and Lie algebras - With a foreword by Martin Taylor''}, London Mathematical Society Student Texts, 32. Cambridge University Press, Cambridge, 1995. vii+190 pp.
\bibitem[Dg]{Dg} de Graaf, Willem Adriaan; {\it ``Computation with linear algebraic groups''}, Monographs and Research Notes in Mathematics. CRC Press, Boca Raton, FL, 2017. xiv+327 pp. ISBN: 978-1-4987-2290-2. 
\bibitem[EW]{ew} Erdmann, Karin; Wildon, Mark J.; {\it ``Introduction to Lie algebras''}, Springer Undergraduate Mathematics Series. Springer-Verlag London, Ltd., London, 2006. x+251 pp.
\bibitem[Ge]{ge} Geck, Meinolf; {\it ``An introduction to algebraic geometry and algebraic groups''}, First paperback reprinting of the 2003 original. Oxford Graduate Texts in Mathematics, 20. Oxford University Press, Oxford, 2013. xii+307 pp.
\bibitem[Gr]{gr} Grove, Larry C.; {\it ``Classical groups and geometric algebra''}, Graduate Studies in Mathematics, 39. American Mathematical Society, Providence, RI, 2002.
\bibitem[GZ]{gz} Gille, Philippe; Szamuely, Tamás; {\it ``Central simple algebras and Galois cohomology''}, Cambridge Studies in Advanced Mathematics, 101. Cambridge University Press, Cambridge, 2006. 
\bibitem[Hu]{hu} Humphreys, James E.; {\it ``Linear algebraic groups''}, Graduate Texts in Mathematics, No. 21. Springer-Verlag, New York-Heidelberg, 1975. xiv+247 pp.
\bibitem[Hu2]{hu2} Humphreys, James E.; {\it ``Introduction to Lie algebras and representation theory''}, Graduate Texts in Mathematics, Vol. 9. Springer-Verlag, New York-Berlin, 1972. xii+169 pp. 
\bibitem[Hu3]{hu3} Humphreys, James E.; {\it ``Reflection groups and Coxeter groups''}, Cambridge Studies in Advanced Mathematics, 29. Cambridge University Press, Cambridge, 1990. xii+204 pp. 
\bibitem[La]{la} Lam, T. Y.; {\it ``Introduction to quadratic forms over fields''}, Graduate Studies in Mathematics, 67. American Mathematical Society, Providence, RI, 2005.
\bibitem[LT]{lt} Lehrer, Gustav I.; Taylor, Donald E.; {\it ``Unitary reflection groups''}, Australian Mathematical Society Lecture Series, 20. Cambridge University Press, Cambridge, 2009. viii+294 pp.
\bibitem[MT]{mt} Malle, Gunter; Testerman, Donna; {\it ``Linear algebraic groups and finite groups of Lie type''}, Cambridge Studies in Advanced Mathematics, 133. Cambridge University Press, Cambridge, 2011. 
\bibitem[Pi]{pi} Pierce, Richard S. ;{\it ``Associative algebras''}, Graduate Texts in Mathematics, 88. Studies in the History of Modern Science, 9. Springer-Verlag, New York-Berlin, 1982.
\bibitem[Sp]{sp} Springer, T. A.; {\it ``Linear algebraic groups''}, Reprint of the 1998 second edition. Modern Birkhäuser Classics. Birkhäuser Boston, Inc., Boston, MA, 2009. xvi+334 pp. 
\bibitem[St]{st} Steinberg, Robert; {\it ``Conjugacy classes in algebraic groups''}, Notes by Vinay V. Deodhar. Lecture Notes in Mathematics, Vol. 366. Springer-Verlag, Berlin-New York, 1974. vi+159 pp. 
\bibitem[SV]{sv} Springer, Tonny A.; Veldkamp, Ferdinand D., {\it`` Octonions, Jordan algebras and exceptional groups''}, Springer Monographs in Mathematics. Springer-Verlag, Berlin, 2000.
\bibitem[Ta]{ta} Taylor, Donald E.; {\it ``The geometry of the classical groups''}, Sigma Series in Pure Mathematics, 9. Heldermann Verlag, Berlin, 1992. xii+229 pp.
\bibitem[Th]{th} Thakur, Maneesh, {\it`` Cayley algebra bundles on $A^2_K$ revisited''}, Comm. Algebra 23 (1995), no. 13, 5119-5130.
\bibitem[TY]{ty} Tauvel, Patrice; Yu, Rupert W. T.; {\it ``Lie algebras and algebraic groups''}, Springer Monographs in Mathematics. Springer-Verlag, Berlin, 2005. xvi+653 pp.
\bibitem[Vi]{vi}  Vignéras, Marie-France; {\it `` Arithmétique des algèbres de quaternions. (French) [Arithmetic of quaternion algebras]''}, Lecture Notes in Mathematics, 800. Springer, Berlin, 1980.
\end{thebibliography}
\end{document}